\newtheorem{theorem}{Theorem}[section]
\newtheorem{proposition}[theorem]{Proposition}
\newtheorem{lemma}[theorem]{Lemma}
\newtheorem{corollary}[theorem]{Corollary}
\theoremstyle{definition}
\newtheorem{definition}[theorem]{Definition}
\theoremstyle{remark}
\newtheorem{remark}[theorem]{Remark}
\numberwithin{equation}{section}
\newcommand{\tr}{\mathrm{tr}}
\newcommand{\ML}{\mathcal{ML}}
\newcommand{\SLC}{\mathrm{SL}(2,\mathbb{C})}
\newcommand{\PSLC}{\mathrm{PSL}(2,\mathbb{C})}
\title[Exotic components in linear slices]{Exotic components in linear slices of quasi-Fuchsian groups}
\author{Yuichi Kabaya}
\address{Faculty of Engineering, Kitami Institute of Technology,
165 Koen-cho Kitami, Hokkaido, JAPAN}
\email{kabaya@mail.kitami-it.ac.jp}
\thanks{The author is supported by JSPS Grant-in-Aid for Scientific Research No. 23244005 and Grant-in-Aid for Young Scientists (B) No. 26800038.}
\keywords{quasi-Fuchsian groups, character variety, once-punctured torus groups}
\subjclass[2010]{57M50, 20H10, 22E40}
\begin{document}
\maketitle

\begin{abstract}
The linear slice of quasi-Fuchsian once-punctured torus groups is defined by fixing the complex length of some simple closed curve to be a fixed positive real number. 
It is known that the linear slice is a union of disks, and it always has one standard component containing Fuchsian groups. 
Komori and Yamashita proved that there exist non-standard components if the length is sufficiently large. 
We give two other proofs of their theorem, one is based on some properties of length functions, 
and the other is based on the theory of complex projective structures and complex earthquakes.
From the latter proof, we can characterize the existence of non-standard components in terms of exotic projective structures with quasi-Fuchsian holonomy.
\end{abstract}

\section{Introduction}
The Ending Lamination Theorem for Kleinian surface groups proved by Brock-Canary-Minsky \cite{minsky_elt}, \cite{BCM} gives 
a complete classification of discrete faithful representations of surface groups into $\PSLC$. 
However, as surveyed in \cite{canary}, the set of these representations in the character variety is known to be quite complicated. 
In this paper, we study once-punctured torus groups, in particular, slices of the space of quasi-Fuchsian once-punctured torus groups, called \emph{linear slices}.

Let $S$ be a once-punctured torus.
We denote by $\mathcal{QF}(S)$ the space of conjugacy classes of faithful quasi-Fuchsian representations.
We fix an essential simple closed curve $\gamma$ and consider the complex length function 
$\lambda_\gamma : \mathcal{QF}(S) \to \mathbb{C} / 2 \pi \sqrt{-1} \mathbb{Z}$ (see \S \ref{subsec:complex_length}).
For a positive real number $l$, we define the \emph{linear slice} $\mathcal{QF}(l)$ by the subset of $\mathcal{QF}(S)$ satisfying 
the identity $\lambda_\gamma = l$.

By McMullen's disk convexity of $\mathcal{QF}(S)$ (see Theorem \ref{thm:mcmullen_disk_convexity}), it is known that $\mathcal{QF}(l)$ is a union of open (topological) disks.
We can easily observe that $\mathcal{QF}(l)$ always has a unique component containing Fuchsian representations.
This component is called the \emph{BM-slice} (Bers-Maskit slice) in \cite{keen-series04}, or the \emph{standard component} in \cite{komori-yamashita}.
Komori and Yamashita showed in \cite{komori-yamashita} that $\mathcal{QF}(l)$ coincides with this standard component if $l$ is sufficiently small based on Otal's work \cite{otal}.
They also showed the following.

\begin{theorem}[Komori-Yamashita \cite{komori-yamashita}]
\label{komori_yamashita_theorem}
If $l > 0$ is sufficiently large, there exist infinitely many components in the linear slice $\mathcal{QF}(l)$.
\end{theorem}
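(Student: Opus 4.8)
The plan is to run the argument through complex projective ($\mathbb{CP}^1$-) structures and complex earthquakes, since this is the route that also yields the promised characterization. The first move is to reduce the statement to the construction of a \emph{single} non-standard component. Introduce the complex Fenchel--Nielsen twist $\tau_\gamma$ along $\gamma$, so that $(\lambda_\gamma,\tau_\gamma)$ are holomorphic coordinates on $\QF(S)$ and $\QF(l)$ is an open subset of the affine line $\{\lambda_\gamma=l\}$. The Dehn twist $T_\gamma$ preserves quasi-Fuchsianness and fixes $\lambda_\gamma$, so it acts on $\QF(l)$; in these coordinates it is $\tau_\gamma\mapsto\tau_\gamma+l$. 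Recalling that bending along $\gamma$ by $2\pi$ returns the same holonomy (Goldman), the imaginary part of $\tau_\gamma$ is $2\pi$-periodic, so $\tau_\gamma$ lives on the cylinder $\mathbb{C}/2\pi\sqrt{-1}\mathbb{Z}$ and $T_\gamma$ has infinite order there. The standard component contains the whole Fuchsian locus $\{\mathrm{Im}\,\tau_\gamma=0\}$ and is therefore $T_\gamma$-invariant, whereas any component disjoint from the Fuchsian locus is, by Theorem~\ref{thm:mcmullen_disk_convexity}, a disk that one checks to be bounded in the twisting direction and hence not $T_\gamma$-invariant. A single such component $E$ then produces the infinite family $\{T_\gamma^{\,n}E\}_{n\in\mathbb{Z}}$, and the theorem reduces to exhibiting one non-standard component when $l$ is large.

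To produce it, I would examine the holonomy map on the sheet of exotic structures obtained by grafting along $\gamma$. Start from a Fuchsian $X\in\mathcal{T}(S)$ with $\ell_\gamma(X)=l$ and form the $\mathbb{CP}^1$-structure $\Gr_{2\pi\gamma}(X)$; by Thurston's grafting parametrization and Goldman's theorem its holonomy is the Fuchsian representation $\rho_X\in\partial\QF(S)$, so it is an exotic projective structure sharing its holonomy with the uniformizing structure of $X$. Letting $X$ range over the $\ell_\gamma=l$ twist line gives a family of exotic structures whose holonomies sweep the Fuchsian locus. I would then deform off the Fuchsian locus, tracking the exotic $\mathbb{CP}^1$-structure through the local homeomorphism furnished by the holonomy of $\mathbb{CP}^1$-structures and using McMullen's complex earthquake theorem to keep the holonomies quasi-Fuchsian; since grafting along $\gamma$ preserves $\lambda_\gamma$, the deformed holonomies stay in $\QF(l)$. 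The grafting multiplicity along $\gamma$ is a locally constant invariant of these structures, equal to $0$ on the standard component and to $1$ here, so the resulting holonomies lie in a non-standard component provided they are not secretly joined to the Fuchsian locus inside the slice.

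The main obstacle is exactly this last proviso, and it is where the hypothesis that $l$ be large enters. Pushing $\Gr_{2\pi\gamma}(X)$ into $\QF(l)$ so that it remains \emph{separated} from the standard component is possible only when the geodesic $\gamma$ is long; this is dual to Otal's unknotting theorem---used by Komori--Yamashita to show that a sufficiently short $\gamma$ forces $\QF(l)$ to coincide with the standard component---and the threshold in our theorem is precisely where that unknotting fails. Making this rigorous requires (i) the description of $\mathbb{CP}^1$-structures with quasi-Fuchsian holonomy as grafts along multicurves, in order to define the grafting invariant on all of $\QF(l)$ and to know it is locally constant, and (ii) a quantitative estimate, as $l\to\infty$, governing the competition between the width of the grafting annulus and the collar of $\gamma$, which is what guarantees that the $2\pi$-grafted family cannot be connected to the Fuchsian locus within the slice. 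I expect step (ii) to be the crux; the coordinate reduction of the first paragraph and the bookkeeping of the grafting invariant in the second are comparatively routine, and the length-function argument can serve as an independent check that the threshold behaves as predicted.
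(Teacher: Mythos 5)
Your first paragraph (reduction to producing a single non-standard component via the Dehn twist $T_\gamma$) is fine and matches Corollary \ref{cor:infinitely_many_by_dehn_twists}. The construction in your second paragraph, however, contains a fatal error: grafting by $2\pi$ along $\gamma$ itself cannot produce a non-standard component. The structures $\Gr_{2\pi\gamma}(X)$ and their deformations along the complex earthquake $\Eq_{\overline{\mathbb{H}}\cdot\gamma}(X_l)$ are exactly the points with $\mathrm{Im}(\tau)$ near $2\pi$, and under the holonomy map (which on this half-plane is $\tau\mapsto\tau \bmod 2\pi\sqrt{-1}\mathbb{Z}$, Proposition \ref{prop:summary_of_complex_earthquake}) these land right back at, or next to, the Fuchsian locus of $\QF(l)$ --- hence in the standard component. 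This is precisely the content of Proposition \ref{prop:converse_to_the_criterion_lemma}: if the exotic sheet met by the earthquake is $Q_{k\cdot\gamma}$ for some $k\geq 0$, the holonomy lies in $BM$. Relatedly, your ``grafting multiplicity along $\gamma$'' is \emph{not} an invariant of points of $\QF(l)$: one and the same quasi-Fuchsian representation is simultaneously the holonomy of structures in $Q_0$, $Q_{1\cdot\gamma}$, $Q_{2\cdot\gamma},\dots$ (this is how the $2\pi$-grafting construction works), so the integer $k$ is locally constant on $P(S)$ but does not descend to a function on the slice. You are conflating components of $\mathrm{hol}^{-1}(\QF(S))$ in $P(S)$ with components of $\QF(l)$ in $X(l)$.

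The idea that rescues the argument --- and the one the paper uses --- is to arrange that the earthquake half-plane $\Eq_{\overline{\mathbb{H}}\cdot\gamma}(X_l)$ meets an exotic sheet $Q_\mu$ with $\mu$ \emph{not} a nonnegative multiple of $\gamma$ (Lemma \ref{lem:criterion_for_existence_of_non-standard_component}); for instance $\mu=k\cdot\beta$ with $\beta$ a curve crossing $\gamma$. Since such a component of $\Eq_{\overline{\mathbb{H}}\cdot\gamma}(X_l)\cap Q_\mu$ is confined to a strip $2(m-1)\pi<\mathrm{Im}(\tau)<2m\pi$, its holonomy image is a component of $\QF(l)$ containing no Fuchsian point. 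The paper produces such an intersection by taking $(\tfrac{2\pi k}{n}\cdot D_\beta^{\,n}\alpha, X)\to(2\pi k\,\beta,X)\in Q_{k\beta}$ in Thurston coordinates, using openness of $Q_{k\beta}$, and then pulling back by $D_\beta^{-n}$ to place the structure on a twist-and-graft line along $\alpha=\gamma$ over $X_l$ with $l=l_\alpha(D_\beta^{-n}(X))\to\infty$. No collar-versus-annulus estimate of the kind you anticipate in step (ii) is needed; the openness of the exotic sheets plus equivariance under the mapping class group does all the work. As written, your proposal would, after all the bookkeeping, prove only that certain points lie in the standard component.
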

They did not show that there are infinitely many, but this is an easy consequence of the action of Dehn twists 
along the curve $\gamma$ (Corollary \ref{cor:infinitely_many_by_dehn_twists}).  
Their proof is based on an analysis of the Earle slice of quasi-Fuchsian once-punctured torus groups developed in \cite{komori-series}.
Since the Earle slice is defined by using some symmetry of the once-punctured torus, it is difficult to generalize their result to general hyperbolic surfaces.

In this paper we give two other proofs of Theorem \ref{komori_yamashita_theorem}. 
The first one is done using a theorem of Parker and Parkkonen \cite{parker-parkkonen} (see Theorem \ref{thm:parker-parkkonen}) 
and some explicit calculations of trace functions (\S \ref{subsec:trace_functions}).

The second proof provides more geometric information.
This is done using the theory of complex projective structures and complex earthquakes.
The linear slice $\mathcal{QF}(l)$ is lifted to the \emph{complex earthquake} \cite{mcmullen}, 
which is a subset of the space of marked complex projective structures.
Since each lift of a component of $\mathcal{QF}(l)$ belongs to the set of complex projective structures with quasi-Fuchsian holonomy, 
it is parametrized by Goldman's classification \cite{goldman} (see Theorem \ref{thm:goldman}).
We give a criterion for the existence of non-standard components in terms of Goldman's classification in Lemma \ref{lem:criterion_for_existence_of_non-standard_component}.
Roughly, a non-standard component of $\mathcal{QF}(l)$ corresponds to an \emph{exotic} component (see \S \ref{subsec:cpx_proj_str_with_qf_hol}, \cite{ito_LMS}, \cite{dumas}) of the set of complex projective structures with quasi-Fuchsian holonomy. 

This paper is organized as follows.
In \S \ref{sec:background}, we review the basics of Kleinian surface groups and the definition of linear slices.
In \S \ref{sec:character_variety}, we give an explicit parametrization of the linear slice and calculate trace functions in terms of them.
In \S \ref{sec:linear_slices}, we further investigate the linear slice, especially the BM-slice, then give the first proof.
In \S \ref{sec:projective_structure}, we review the theory of complex projective structures and provide the second proof.
As a complement, we discuss pleated surfaces associated to the linear slice in \S \ref{sec:pleated_surfaces}.

\noindent
{\bf Acknowledgments.}
I would like to thank Hideki Miyachi and Ken'ichi Ohshika for invaluable discussions during this work.
I also thank Yasushi Yamashita for sharing his program.
Finally, I would like to thank the referee for helpful comments.

\section{Background}
\label{sec:background}
We denote a surface of genus $g$ with $n$ punctures by $S_{g,n}$. 
We assume that the Euler characteristic $2-2g-n < 0$.
If $g$ and $n$ are clear from the context, we omit the subscripts.

\subsection{Quasi-Fuchsian groups}
\label{subsec:quasi-fuchsian}
Let $\Gamma$ be a discrete subgroup of $\PSLC$ isomorphic to $\pi_1(S)$.
Since $\PSLC$ can be identified with the orientation preserving isometry group of the 3-dimensional hyperbolic space $\mathbb{H}^3$, 
the quotient $\mathbb{H}^3/ \Gamma$ is a hyperbolic 3-manifold homotopy equivalent to $S$. 
The limit set $\Lambda(\Gamma)$ of $\Gamma$ is the accumulation points of the orbit $\Gamma \cdot p$ in the ideal boundary $\partial_{\infty} \mathbb{H}^3 \cong \mathbb{C}P^1$
for some $p \in \mathbb{H}^3$.
The domain of discontinuity $\Omega (\Gamma)$ is the complement of $\Lambda(\Gamma)$ in $\mathbb{C}P^1$.
If $\Omega (\Gamma)$ consists of exactly two open disks (resp. open round disks), $\Gamma$ is called \emph{quasi-Fuchsian} (resp. \emph{Fuchsian}).
Let $CH(\Gamma)$ be the convex hull of $\Lambda(\Gamma)$ in $\mathbb{H}^3$.
The \emph{convex core} $C(\Gamma)$ is the quotient $CH(\Gamma) / \Gamma$, which is also characterized as the smallest convex subset of $\mathbb{H}^3 / \Gamma$ 
provided that the inclusion map is homotopy equivalent.
If $\Gamma$ is quasi-Fuchsian but not Fuchsian, $\partial C(\Gamma)$ consists of two components.
We denote them by $\partial^{+} C(\Gamma)$ and $\partial^{-} C(\Gamma)$. 
If $\Gamma$ is Fuchsian, $C(\Gamma)$ is a totally geodesic surface in $\mathbb{H}^3 / \Gamma$. 
In this case, we define $\partial^{\pm} C(\Gamma)$ by this surface.
It is known that the induced path metric on $\partial^{\pm} C(\Gamma)$ is a hyperbolic metric. 
In fact, $\partial^{\pm} C(\Gamma)$ are totally geodesic surfaces bent along geodesic laminations.

A closed subset in a hyperbolic surface $S$ is called a \emph{geodesic lamination} if it is a union of disjoint simple geodesics.
A \emph{measured lamination} is a geodesic lamination with a full-support transverse measure.
We denote the set of all compactly supported measured laminations by $\mathcal{ML}(S)$ equipped with the weak* topology.
A basic example is a simple closed geodesic with a transverse Dirac measure, which is regarded as a weighted simple closed curve on $S$.
It is known that the set of weighted simple closed curves is dense in $\mathcal{ML}(S)$.

For a quasi-Fuchsian group $\Gamma$, the \emph{bending lamination} $pl^{\pm}(\Gamma)$ is the measured lamination on $\partial^{\pm} C(\Gamma)$ 
such that the complement of the support of $pl^{\pm}(\Gamma)$ consists of totally geodesic surfaces 
and the transverse measure coincides with the exterior angle between these totally geodesic pieces.
The support of $pl^{\pm}(\Gamma)$ is called the \emph{bending locus}.
We remark that we can recover $\Gamma$ up to conjugation from the pair $\partial^{+} C$ and $pl^+(\Gamma)$ (resp. the pair $\partial^{-} C$ and $pl^-$)
by considering the developing map of the \emph{pleated surface} determined by this pair \cite{epstein-marden}.

\subsection{Deformation space}
Let $R_{par}(S)$ be the set of representations of $\pi_1(S)$ into $\PSLC$ taking peripheral elements to parabolic elements.
From a presentation of $\pi_1(S)$, $R_{par}(S)$ is defined to be an affine algebraic set.
The \emph{character variety} is the algebro-geometric quotient of $R_{par}(S)$ under the conjugation action of $\PSLC$.
If we restrict our attention to irreducible representations, the character variety is nothing but the usual quotient of $R_{par}(S)$ by the conjugation action. 
We denote this quotient of the set of irreducible representations by $X_{par}(S)$. 
For a representation $\rho$, we denote its conjugacy class by $[\rho]$.

A representation $\rho$ is called \emph{quasi-Fuchsian} (resp. \emph{Fuchsian}) if $\rho$ is faithful and $\rho(\pi_1(S))$ is a quasi-Fuchsian (resp. Fuchsian) group.
We denote the subset of $X_{par}(S)$ consisting of quasi-Fuchsian representations by $\mathcal{QF}(S)$.
It is known that $\mathcal{QF}(S)$ is contained in the set of discrete faithful representations $AH(S)$ and $\mathrm{Int}(AH(S)) = \mathcal{QF}(S)$.
Moreover the closure of $\mathcal{QF}(S)$ coincides with $AH(S)$ by the resolution of the Density Conjecture.
(For our main purpose, the once-punctured torus case, this is due to Minsky \cite{minsky_opt}.)

\subsection{Complex length and linear slice}
\label{subsec:complex_length}
Recall that $\PSLC$ acts on the hyperbolic space $\mathbb{H}^3$ as orientation preserving isometries.
For a loxodromic or hyperbolic element $A \in \PSLC$, 
we define the \emph{complex length} of $A$ by $\lambda(A) = l(A) + a(A) \sqrt{-1}$ where $l(A)$ is the translation distance of $A$ and $a(A)$ is the rotation angle.
So $\lambda(A)$ is defined modulo $2 \pi \sqrt{-1} \mathbb{Z}$.
It is easy to see that $A$ is conjugate to an upper triangular matrix $\begin{pmatrix} e^{\lambda(A)/2} & * \\ 0 & e^{-\lambda(A)/2} \end{pmatrix}$.
Using this presentation, we extend the definition of $\lambda(A)$ for elements other than loxodromic or hyperbolic ones.

A simple closed curve is said to be \emph{essential} if it is not homotopic to a point or a peripheral curve.
Let $\mathcal{S}(S)$ be the set of all isotopy classes of unoriented essential simple closed curves on $S$.
For $\gamma \in \mathcal{S}(S)$, we take an element $g \in \pi_1(S)$ homotopic to $\gamma$,
then define a function $\lambda_\gamma$ on $X_{par}(S)$ with values in $\mathbb{C} / (2 \pi \sqrt{-1} \mathbb{Z})$ by $\lambda_\gamma(\rho) = \lambda (\rho(g))$. 
We have
\begin{equation}
\pm \tr (\rho) = e^{\lambda_{\rho}/2} + e^{-\lambda_{\rho}/2} = 2 \cosh \left( \frac{\lambda_{\rho}}{2} \right).
\end{equation}
We remark that 
\[
\cosh \left( \frac{\lambda + 2 \pi \sqrt{-1}}{2} \right) = - \cosh \left( \frac{\lambda}{2} \right) ,
\]
which is consistent with the fact that the trace is only well-defined up to sign for a $\PSLC$-representation.

For $\gamma \in \mathcal{S}(S)$ and a complex number $\lambda$, we define the \emph{linear slice} $X_{\gamma}(\lambda) \subset X_{par}(S)$ by 
\[
X_{\gamma} (\lambda) = \{ [\rho] \in X_{par}(S) \mid \lambda_{\gamma}(\rho) = \lambda \}.
\]
(We remark that the linear slice is sometimes defined by $X_{\gamma}(\lambda) \cap AH(S)$ in the literature.)
If $\gamma$ is clear from the context, we omit the subscript.
For instance, if $S$ is a once-punctured torus, all essential simple closed curves are related by homeomorphisms of $S$, 
thus it is not important to indicate the curve $\gamma$.

We denote the intersection $\mathcal{QF}(S) \cap X_{\gamma}(\lambda)$ by $\mathcal{QF}_{\gamma}(\lambda)$ or $\mathcal{QF}(\lambda)$ 
if $\gamma$ is clear from the context. 
In this paper, we shall study the shape of $\mathcal{QF}(\lambda)$ in $X(\lambda)$.
It is shown in \cite[Proposition 4.3]{komori-yamashita} that
if $X(\lambda)$ is 1-dimensional (i.e. $S$ is a once-punctured torus or a four-times punctured sphere), each component of $\mathcal{QF}(\lambda)$ is an open disk.
This follows from a theorem of McMullen \cite[Theorem 5.1]{mcmullen}.
\begin{theorem}[McMullen]
\label{thm:mcmullen_disk_convexity}
The space of quasi-Fuchsian groups $\mathcal{QF}(S)$ is disk-convex in $X_{par}(S)$.
That is, every continuous map $f$ from the closed disk $\overline{\Delta}$ to $\mathcal{QF}(S)$ such that $f |_{\Delta}$ is holomorphic 
and $f(\partial \overline{\Delta}) \subset \mathcal{QF}(S)$ implies $f(\overline{\Delta}) \subset \mathcal{QF}(S)$.
\end{theorem}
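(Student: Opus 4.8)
The plan is to lift the given disk to a holomorphic family of representations and then propagate the quasi-Fuchsian condition from the boundary circle into the interior by controlling a holomorphic motion of the limit sets with a maximum principle. First I would lift $f$ to a family $\{\rho_t\}_{t\in\overline{\Delta}}$ of representations $\pi_1(S)\to\PSLC$ (or to $\SLC$ after passing to a lift), continuous on $\overline{\Delta}$ and holomorphic on $\Delta$; such a lift exists because $\overline{\Delta}$ is simply connected and the quotient map onto $X_{par}(S)$ admits local holomorphic sections over the irreducible locus, which is open and contains $\QF(S)$. By hypothesis $\rho_t$ is quasi-Fuchsian for every $t\in\partial\overline{\Delta}$.

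Next I would exploit the characterization of the quasi-Fuchsian condition that makes it both open and propagatable. The set $U=\{t\in\overline{\Delta}\mid \rho_t\in\QF(S)\}$ is open in $\overline{\Delta}$, since $\QF(S)=\mathrm{Int}(AH(S))$ is open in $X_{par}(S)$, and it contains the whole boundary circle. Suppose, toward a contradiction, that $U\neq\overline{\Delta}$. Over $U$ the limit sets $\Lambda(\rho_t)$ are quasicircles bounding the two invariant disks, and after normalizing at three points they vary holomorphically in $t$: fixing a reference quasi-Fuchsian representation on $\partial\overline{\Delta}$, with limit set $\Lambda_0$, one obtains $\rho_t$-equivariant embeddings whose images are $\Lambda(\rho_t)$. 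Concretely, the fixed points of the loxodromic elements $\rho_t(g)$, $g\in\pi_1(S)$, are holomorphic functions of $t\in\Delta$ and form a dense subset of the limit sets, so over $U$ these points move injectively and assemble into a holomorphic motion of $\Lambda_0$ equivariant with respect to the family $\rho_t$.

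The crux is to show this motion persists, without degeneration, across all of $\Delta$. If one can show the motion of fixed points stays injective over the whole disk, then the $\lambda$-lemma (Ma\~n\'e--Sad--Sullivan) and S\l odkowski's extension theorem upgrade it to a holomorphic motion of $\overline{\Lambda_0}$ with quasiconformal slices $\phi_t$; equivariance then forces $\Lambda(\rho_t)=\phi_t(\Lambda_0)$ to be a quasicircle and $\rho_t\in\QF(S)$, so $U=\overline{\Delta}$ and $f(\overline{\Delta})\subset\QF(S)$. The main obstacle, and the analytic heart of the argument, is precisely this persistence: an interior $\rho_t$ could a priori be indiscrete, or discrete but lying on $\partial\QF(S)\subset AH(S)$ as a degenerate or accidentally parabolic group, in which case fixed points would collide and the quasiconformal dilatation of $\phi_t$ would blow up. Ruling this out requires a maximum-principle input — one controls a subharmonic function of $t$, governing the logarithm of the dilatation (equivalently the quality of the quasicircle), which attains its maximum on $\partial\overline{\Delta}$ and thereby bounds the distortion throughout $\Delta$, forbidding any collapse of the limit set in the interior. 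Establishing this subharmonicity and the attendant uniform control is the step I expect to be the most delicate.
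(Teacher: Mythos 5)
First, a remark on the comparison itself: the paper does not prove this statement --- it is quoted from McMullen with the citation \cite[Theorem 5.1]{mcmullen} --- so there is no internal proof to measure yours against, and your attempt has to stand on its own. (You also correctly read the intended hypothesis as $f\colon\overline{\Delta}\to X_{par}(S)$ with $f|_\Delta$ holomorphic and $f(\partial\overline{\Delta})\subset\mathcal{QF}(S)$; as printed the statement is vacuous.)

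As written, your argument has a genuine gap at exactly the point you flag. The setup is fine: lifting $f$ to a holomorphic family $\rho_t$, noting that $U=\{t\in\overline{\Delta}\mid \rho_t\in\mathcal{QF}(S)\}$ is open and contains $S^1$, assembling the fixed points of the $\rho_t(g)$ into an equivariant holomorphic motion of the limit set over $U$, and the final implication (injective equivariant motion over all of $\Delta$ $\Rightarrow$ quasiconformal conjugacy via the $\lambda$-lemma and its equivariant extension $\Rightarrow$ $\rho_t$ quasi-Fuchsian) is standard. But the whole content of the theorem sits in the step you defer, namely that the motion does not degenerate on $\Delta\setminus U$, and the tool you propose for it cannot be applied as stated: the dilatation of $\phi_t$ (hence your subharmonic function) is only defined where $\phi_t$ exists, i.e.\ on $U$, which is precisely the set you are trying to prove equals $\overline{\Delta}$. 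A maximum principle on $\overline{\Delta}$ cannot be invoked for a function defined only on an unknown neighbourhood of $S^1$, and no subharmonic extension across the bad set is exhibited; indeed the quantitative bound $K(\phi_t)\le\frac{1+|t|}{1-|t|}$ from the $\lambda$-lemma is a consequence of the motion being defined over the whole disk, not a means of establishing it. So the argument is circular at its crux. A secondary issue: the fixed points of $\rho_t(g)$ are roots of a quadratic and are single-valued holomorphic functions of $t$ only away from the branch locus $\tr^2\rho_t(g)=4$ and the locus where $\rho_t(g)$ degenerates, and nothing in your argument yet excludes accidental parabolics or reducible specialisations in the interior, so even the domain of definition of the candidate motion on $\Delta\setminus U$ requires justification. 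To close the gap one needs a quantity defined and subharmonic on all of $\overline{\Delta}$ whose boundary values certify discreteness and faithfulness --- this is the substance of McMullen's proof and is the piece missing here.
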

From Theorem \ref{thm:mcmullen_disk_convexity}, each component of $\mathcal{QF}(\lambda)$ is a simply connected domain, thus it is a disk.

\section{Character variety of a once-punctured torus and trace functions}
\label{sec:character_variety}
In this section, we collect some known facts on the character variety of a once-punctured torus $S = S_{1,1}$. 
\subsection{Essential simple closed curves}
\label{subsec:simple_closed_curves}
We fix a system of generators $a, b$ of $\pi_1(S)$ so that the commutator $[a,b]$ is homotopic to the puncture.
For convenience, we give an orientation on $S$ so that the direction from $a$ to $b$ is anti-clockwise.
The homology classes $[a], [b]$ form a basis of $H_1(S;\mathbb{Z})$.
An essential simple closed curve gives a homology class $\pm(p [a] + q [b])$, 
and then a unique element $p/q \in \widehat{\mathbb{Q}} = \mathbb{Q} \cup \{ \infty \}$.
This gives an identification between the set $\mathcal{S}(S)$ of isotopy classes of unoriented essential simple closed curves and $\widehat{\mathbb{Q}}$.

For $p/q \in \widehat{\mathbb{Q}}$, we denote the right Dehn twist along $p/q$ by $D_{p/q}$.
The mapping class group of $S$ acts on $H_1(S; \mathbb{Z}) \cong \mathbb{Z}^2$ faithfully, and is regarded as $\mathrm{SL}(2, \mathbb{Z})$.
In particular, we have 
\[
D_{1/0} = \begin{pmatrix} 1 & 1 \\ 0 & 1 \end{pmatrix}, \quad
D_{0/1} = \begin{pmatrix} 1 & 0 \\ -1 & 1 \end{pmatrix}.
\]

\subsection{$\SLC$ and $\PSLC$ character variety}
The $\SLC$-character variety is similarly defined as in the case of $\PSLC$. 
Let $R^{SL}_{par}(S)$ be the set of representations of $\pi_1(S)$ into $\SLC$ taking peripheral elements to parabolic elements.
The $\SLC$-character variety is the algebro-geometric quotient of $R^{SL}_{par}(S)$ under the conjugation action of $\SLC$.
We denote the quotient of the set of irreducible representations of $R^{SL}_{par}(S)$ by $X^{SL}_{par}(X)$.

It is known that the map 
$[\rho] \mapsto (\tr(\rho(a)), \tr(\rho(b)), \tr(\rho(ab)))$ gives an isomorphism of varieties
\begin{equation}
\label{eq:SL_character_variety}
X^{SL}_{par}(S) \xrightarrow[]{\cong} \{ (x,y,z) \in \mathbb{C}^3 \mid x^2 + y^2+ z^2 - x y z = 0 \}.
\end{equation}
Thus we regard $X^{SL}_{par}(S)$ as the algebraic variety defined in the right hand side of (\ref{eq:SL_character_variety}).
The cohomology group $H^1(S; \mathbb{Z}/2 \mathbb{Z})$ acts on $X^{SL}_{par}(S)$ and the quotient by this action 
is known to be isomorphic to the $\PSLC$-character variety $X_{par}(S)$.
Explicitly, 
the generators $[a], [b]$ of $H^1(S; \mathbb{Z}/2 \mathbb{Z})$ act on $X^{SL}_{par}(S)$ as 
\begin{equation}
\label{eq:action_of_H^1(S)}
(x,y,z) \mapsto (-x,y,-z), \quad
(x,y,z) \mapsto (x,-y,-z),
\end{equation}
respectively.

\subsection{Complex Fenchel-Nielsen coordinates}
The complex Fenchel-Nielsen coordinates are defined for quasi-Fuchsian groups in \cite{kourouniotis} and \cite{tan}.
For a once-punctured torus case, Parker and Parkkonen \cite{parker-parkkonen} gave an explicit presentation of the complex Fenchel-Nielsen coordinates.
But we remark that the parameter $\lambda$ in \cite{parker-parkkonen} is half of ours.

Let $\rho$ be a quasi-Fuchsian representation of the once-punctured torus $S$.
Let $C_a$ be a simple closed curve representing the conjugacy class of $a$.
The \emph{length parameter} $\lambda$ is the complex length of $C_a$, 
and the \emph{twist parameter} $\tau$ is naturally defined as a complexification of the classical Fenchel-Nielsen twist parameter.
Anyway, this pair of coordinates gives a biholomorphic map from $\mathcal{QF}(S)$ to a domain in $\mathbb{C}^2$.
The inverse of this map has the following form
\begin{equation}
\label{eq:pre_complexFN}
(\lambda, \tau) \mapsto 
\left( 2 \cosh ( \lambda / 2 ), \, 
\frac{2 \cosh ( \tau / 2 )}{\tanh(\lambda/2)}, 
\, \frac{2 \cosh ( (\tau +\lambda)/ 2 )}{\tanh(\lambda/2)} \right),
\end{equation}
where we regard the target $\mathcal{QF}(S) \subset X^{PSL}_{par}(S)$ as a quotient of $X^{SL}_{par}(S)$ by the action of $H^1(S;\mathbb{Z}/2\mathbb{Z})$ as in (\ref{eq:action_of_H^1(S)}) 
and $X^{SL}_{par}(S)$ as a subset of $\mathbb{C}^3$ by (\ref{eq:SL_character_variety}).
The map defined by (\ref{eq:pre_complexFN}) can be naturally extended to a holomorphic map $(\mathbb{C} \setminus \{0\}) \times \mathbb{C} \to  X^{PSL}_{par}(S)$.
We take a domain in $(\mathbb{C} \setminus \{0\}) \times \mathbb{C}$ which injects into $X^{PSL}_{par}(S)$. 
\begin{proposition}
\label{prop:complexFN}
For $b_0 \in \mathbb{R}$, let 
\begin{equation}
\label{eq:domain_of_definition}
\mathcal{D} = \{ \lambda \in \mathbb{C} \mid  \mathrm{Re}(\lambda) > 0, \,  -\pi < \mathrm{Im}(\lambda) < \pi  \} 
\times \{ \tau \in \mathbb{C} \mid b_0 \leq \mathrm{Im}(\tau) < b_0 + 2 \pi\}.
\end{equation}
Then the map $\psi_{FN} : \mathcal{D} \to X^{PSL}_{par}(S)$ defined by
\begin{equation}
\label{eq:coplexFN_for_PSL}
\psi_{FN}( (\lambda, \tau) ) = \left( 2 \cosh ( \lambda / 2 ), \,
\frac{2 \cosh ( \tau / 2 )}{\tanh(\lambda/2)}, \, \frac{2 \cosh ( (\tau +\lambda)/ 2 )}{\tanh(\lambda/2)} \right),
\end{equation}
is injective. 
The image $\psi_{FN} (\mathcal{D})$ contains $\mathcal{QF}(S)$ entirely.
If we fix $\lambda$ such that $\mathrm{Re}(\lambda) > 0$ and $-\pi < \mathrm{Im}(\lambda) < \pi$, $\psi_{FN}$ takes values in $X(\lambda) = X_{1/0}(\lambda)$, 
and the map $\psi_{FN}(\lambda, \cdot ) : \{ \tau \in \mathbb{C} \mid b_0 \leq \mathrm{Im}(\tau) < b_0 + 2 \pi\} \to  X(\lambda)$ is a bijection.
\end{proposition}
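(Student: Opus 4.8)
The plan is to reduce the statement to explicit algebra together with careful bookkeeping of the $H^1(S;\mathbb{Z}/2\mathbb{Z})$-action in (\ref{eq:action_of_H^1(S)}). First I record that $\mathrm{Re}(\lambda)>0$ forces $\lambda\notin 2\pi\sqrt{-1}\,\mathbb{Z}$, so $\sinh(\lambda/2)\neq 0$, and since $-\pi<\mathrm{Im}(\lambda)<\pi$ also $\cosh(\lambda/2)\neq 0$; hence $\psi_{FN}$ is well defined on $\mathcal{D}$. Using $\cosh((\tau+\lambda)/2)=\cosh(\tau/2)\cosh(\lambda/2)+\sinh(\tau/2)\sinh(\lambda/2)$, a direct computation verifies $x^2+y^2+z^2-xyz=0$, so $\psi_{FN}$ lands in $X^{SL}_{par}(S)$ of (\ref{eq:SL_character_variety}) and therefore in its quotient $X^{PSL}_{par}(S)$. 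I also note the two periodicities that drive the argument: the map $\lambda\mapsto 2\cosh(\lambda/2)$ is injective on $\{\mathrm{Re}(\lambda)>0,\ -\pi<\mathrm{Im}(\lambda)<\pi\}$, and $\tau\mapsto\tau+2\pi\sqrt{-1}$ sends $(x,y,z)$ to $(x,-y,-z)$, i.e. realizes the generator $[b]$ of (\ref{eq:action_of_H^1(S)}); thus $\psi_{FN}$ is $2\pi\sqrt{-1}$-periodic in $\tau$ as a map into $X^{PSL}_{par}(S)$.

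For injectivity, suppose $\psi_{FN}(\lambda,\tau)=\psi_{FN}(\lambda',\tau')$. The four $H^1$-translates of a point have first coordinate $\pm x$, so $2\cosh(\lambda/2)=\pm 2\cosh(\lambda'/2)$; the minus sign is impossible on $\mathcal{D}$ (it would force either $\mathrm{Re}(\lambda)=-\mathrm{Re}(\lambda')$ or $\mathrm{Im}(\lambda)-\mathrm{Im}(\lambda')\in 2\pi+4\pi\mathbb{Z}$), and the plus sign gives $\lambda=\lambda'$ by the injectivity just noted. With $\lambda$ fixed, the matching occurs among the two lifts of first coordinate $2\cosh(\lambda/2)$, namely $(y,z)=\pm(y',z')$. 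Solving the formula gives $\cosh(\tau/2)=\tfrac12 y\tanh(\lambda/2)$ and $\sinh(\tau/2)=(z-y\cosh(\lambda/2))/(2\cosh(\lambda/2))$, so $e^{\tau/2}=\cosh(\tau/2)+\sinh(\tau/2)$ is determined by $(y,z)$, and the residual sign $(y,z)\mapsto(-y,-z)$ only multiplies $e^{\tau/2}$ by $-1$, i.e. shifts $\tau$ by $2\pi\sqrt{-1}$. Hence $\tau$ is determined modulo $2\pi\sqrt{-1}\,\mathbb{Z}$, and the half-open strip $b_0\le\mathrm{Im}(\tau)<b_0+2\pi$ selects the unique representative, so $\tau=\tau'$.

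Fixing $\lambda$, the first coordinate is $2\cosh(\lambda/2)$, so $\lambda_{1/0}=\lambda$ in $\mathbb{C}/2\pi\sqrt{-1}\,\mathbb{Z}$ and $\psi_{FN}(\lambda,\cdot)$ maps into $X(\lambda)$; injectivity is the case just treated. For surjectivity I would take any $(x,y,z)\in X^{SL}_{par}(S)$ with $x=\pm 2\cosh(\lambda/2)$, normalize by $H^1$ to $x=2\cosh(\lambda/2)$, and solve $\cosh(\tau/2)=\tfrac12 y\tanh(\lambda/2)$ for $\tau$ (possible since $\cosh$ is surjective). A short computation gives $z=\cosh(\lambda/2)(y+2\sinh(\tau/2))$, and the two sign choices of $\sinh(\tau/2)$ yield precisely the two roots in $z$ of the quadratic $z^2-xy\,z+(x^2+y^2)=0$ cut out by the defining equation; picking the sign matching the given $z$ and translating $\tau$ into the strip produces the desired preimage. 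That $\psi_{FN}(\mathcal{D})\supset\mathcal{QF}(S)$ then follows from the complex Fenchel-Nielsen theory of \cite{kourouniotis}, \cite{tan}, \cite{parker-parkkonen}: the coordinates $(\lambda,\tau)$ invert $\psi_{FN}$, the length parameter of a quasi-Fuchsian group along $C_a$ satisfies $\mathrm{Re}(\lambda)>0$ and $-\pi<\mathrm{Im}(\lambda)<\pi$, and the $2\pi\sqrt{-1}$-periodicity in $\tau$ places a preimage of each $[\rho]$ in the $b_0$-strip.

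The only genuinely non-formal ingredient, and the step I expect to be the main obstacle, is the strict bound $-\pi<\mathrm{Im}(\lambda)<\pi$ for quasi-Fuchsian groups — equivalently, that such groups avoid the purely imaginary trace locus where $\mathrm{Im}(\lambda)\equiv\pi$ — which I would import from the cited theory rather than reprove; everything else is the careful matching of the $H^1$-sign ambiguities with the $2\pi\sqrt{-1}$-periodicities of $\cosh$.
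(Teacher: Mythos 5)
Your argument is correct, and it takes a genuinely different route from the paper. You work entirely on the trace variety \eqref{eq:SL_character_variety}: you verify directly that $\psi_{FN}$ satisfies $x^2+y^2+z^2=xyz$, and you get injectivity and the surjectivity of $\psi_{FN}(\lambda,\cdot)$ by explicitly inverting the formulas ($\cosh(\tau/2)=\tfrac12 y\tanh(\lambda/2)$, $\sinh(\tau/2)=(z-y\cosh(\lambda/2))/(2\cosh(\lambda/2))$) and matching the four $H^1(S;\mathbb{Z}/2\mathbb{Z})$-sign ambiguities against the periodicities of $\cosh$; I checked these computations and they are right, including the identification of the two sign choices of $\sinh(\tau/2)$ with the two roots of the quadratic $z^2-xyz+(x^2+y^2)=0$. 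The paper instead imports its earlier explicit matrix parametrization $(e_1,t_1)\mapsto\rho$ of \cite{kabaya}, deduces injectivity from the injectivity theorem there after passing to a fundamental domain for the involutions $e_1\mapsto -e_1$ and $(e_1,t_1)\mapsto(e_1^{-1},t_1^{-1})$, and proves surjectivity of $\psi_{FN}(\lambda,\cdot)$ by a separate lemma showing that every $\rho\in X(\lambda)$ restricts irreducibly to $\pi_1(S\setminus C_a)$. Your approach buys self-containedness and avoids the irreducibility lemma entirely (at the cost of trusting the identification of $X_{par}(S)$ with the Markov cubic modulo $H^1$, which the paper also assumes); the paper's approach is less computational and is set up to generalize to higher-genus surfaces. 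On the one step you flag as imported — that quasi-Fuchsian groups satisfy $\mathrm{Re}(\lambda_{1/0})>0$ and $-\pi<\mathrm{Im}(\lambda_{1/0})<\pi$, so that $\mathcal{QF}(S)$ lies over $\mathcal{D}$ — you are right that this is the only non-formal ingredient, and it is legitimately available in \cite{kourouniotis}, \cite{tan}, \cite{parker-parkkonen}. It is worth noting that the paper's own proof relies on exactly the same fact implicitly: the $(e_1,t_1)$-orbits with $e_1$ purely imaginary (equivalently $\mathrm{Im}(\lambda)\equiv\pi$) never meet the fundamental domain corresponding to $\mathcal{D}$, so the deduction ``the image of $E\times\mathbb{C}^*$ contains $\mathcal{QF}(S)$, hence so does $\psi_{FN}(\mathcal{D})$'' also needs quasi-Fuchsian groups to avoid that locus. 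So you are not missing anything the paper supplies; just make the citation for this bound precise rather than leaving it as a pointer to ``the cited theory.''
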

\begin{remark}
We do not assume that $\psi_{FN}^{-1}(\mathcal{QF}(S)) \subset \mathcal{D}$ is connected.
We also remark that the description of $\psi_{FN}$ is very simple but it is difficult to determine $\psi_{FN}^{-1}(\mathcal{QF}) \subset \mathcal{D}$.
\end{remark}
\begin{proof}
We can prove directly from the form (\ref{eq:coplexFN_for_PSL}) of $\psi_{FN}$,
but here we use the explicit parametrization given in \cite[\S 8.2]{kabaya} since the techniques are more applicable for general surfaces.
Let $E = \mathbb{C} \setminus \{0, \, \pm 1, \, \pm \sqrt{-1}\}$.
Using the variable $e_1$ for $E$ and $t_1$ for $\mathbb{C}^*$, 
we define an algebraic map $E \times \mathbb{C}^* \to X^{PSL}_{par}(S)$ by
\begin{equation}
\label{eq:explicit_matrices}
\rho(a) = \begin{pmatrix} e_1 & 2 e_1^{-1} \\ 0 &e_1^{-1} \end{pmatrix}, \quad 
\rho(b) = \frac{1}{\sqrt{t_1}(e_1^2-1)} \begin{pmatrix} (e_1^2+1)t_1 + 2 & -2(t_1+1) \\ -e_1^2+1 & e_1^2-1 \\ \end{pmatrix}. 
\end{equation}
(We remark that the condition $e_1 \neq \pm \sqrt{-1}$ is equivalent to $- {e_1}^2 \neq 1$, which was described in (8) of \cite[\S 5.1]{kabaya}.)
This map becomes injective (see \cite[Theorem 2]{kabaya}) after taking the quotient by two $\mathbb{Z}/2\mathbb{Z}$-actions defined by 
$(e_1,t_1) \mapsto ({e_1}^{-1}, {t_1}^{-1})$ ((34) of \cite[\S 9.2]{kabaya}) and $e_1 \mapsto -e_1$ (\cite[\S 3.2]{kabaya}).
Thus it is injective on $\{ e_1 \in \mathbb{C} \mid |e_1|>1, \, \mathrm{Im}(e_1)>0  \} \times \mathbb{C}^*$.

Define a map $\mathcal{D} \to E \times \mathbb{C}^*$ by $(\lambda, \tau) \mapsto (e_1, t_1) = (\exp(\lambda/2), \exp(\tau))$, 
which maps $\mathcal{D}$ to $\{ e_1 \in \mathbb{C} \mid |e_1|>1, \, \mathrm{Im}(e_1)>0  \} \times \mathbb{C}^*$ injectively.
By the above result, the composition 
\[
\mathcal{D} \to \{ e_1 \in \mathbb{C} \mid |e_1|>1, \, \mathrm{Im}(e_1)>0  \} \times \mathbb{C}^* \subset 
E \times \mathbb{C}^* \to X^{PSL}_{par}(S)
\] 
is injective.
By direct calculation, we can check that this composition coincides with $\psi_{FN}$.

Since by \cite[Theorem 2]{kabaya} the image of $E \times \mathbb{C}^* \to X^{PSL}_{par}(S)$ contains all representations such that 
$\rho|_{\pi_1 (S \setminus C_a)}$ is irreducible and $\rho(a)$ is hyperbolic or loxodromic, 
$\psi_{FN} (\mathcal{D})$ contains $\mathcal{QF}(S)$ entirely.
If we fix $\lambda$, it is clear that $\psi_{FN}$ takes values in $X(\lambda) = X_{1/0}(\lambda)$.
For the surjectivity of $\psi_{FN}(\lambda, \cdot )$, 
it is sufficient to show that if $\rho \in X(\lambda)$, then $\rho|_{\pi_1 (S \setminus C_a)}$ is irreducible. 
We fix generators $a$ and $b^{-1} a b$ for $\pi_1 (S \setminus C_a)$.
Assume that $\rho|_{\pi_1 (S \setminus C_a)}$ is reducible, then we can conjugate $\rho$ such that 
\[
\rho(a) = \begin{pmatrix} e_1 & 0 \\ 0 & e_1^{-1} \end{pmatrix}, \quad 
\rho(b^{-1} a b ) = \begin{pmatrix} x_1 & x_2 \\ 0 & x_4 \end{pmatrix},
\]
where $e_1 = \exp(\lambda/2) \neq 0, \pm 1, \pm \sqrt{-1}$.
Since $\rho(a)^{-1} \rho(b^{-1} a b ) = \begin{pmatrix} e_1^{-1} x_1 & * \\ 0 & e_1 x_4 \end{pmatrix}$ is parabolic, $(x_1, x_4) = \pm (e_1, e_1^{-1})$.
If we let $\rho(b) = \begin{pmatrix} y_1 & y_2 \\ y_3 & y_4 \end{pmatrix}$,
from $\rho(b)^{-1} \rho(a) \rho(b) = \rho(b^{-1} a b)$, we have 
\[
\begin{pmatrix} e_1 y_1 y_4 - e_1^{-1} y_2 y_3 & (e_1 - e_1^{-1}) y_2 y_4 \\ (e_1^{-1} - e_1) y_1 y_3 & e_1^{-1} y_1 y_4 - e_1 y_2 y_3 \end{pmatrix}
= \begin{pmatrix} \pm e_1 & * \\ 0 & \pm e_1^{-1} \end{pmatrix},
\]
thus $y_1 = 0$ or $y_3 = 0$. If $y_1 = 0$, we have $\pm e_1 ^2 = - y_2 y_3 =  \pm e_1 ^{-2}$. 
This contradicts $e_1 \neq \pm 1, \pm \sqrt{-1}$.
We conclude that $y_3= 0$, thus $\rho$ is reducible.
\end{proof}

Next, we consider the action of the mapping class group $\mathrm{SL}(2,\mathbb{Z})$.
By (37) of \cite{kabaya}, the Dehn twist $D_{1/0}$ acts on the parameters $(e_1,t_1)$ in the proof of Proposition \ref{prop:complexFN} as 
$D_{1/0} (e_1, t_1) = (e_1, e_1^2 t_1)$.
Thus on $\mathcal{D}$, it acts as 
\begin{equation}
\label{eq:dehn_twist_action}
D_{1/0} (\lambda, \tau) = (\lambda, \tau + \lambda),
\end{equation}
modulo $2 \pi \sqrt{-1} \mathbb{Z}$ on the second factor.
The action by the matrix $\begin{pmatrix} 0 & -1 \\ 1 & 0 \end{pmatrix}$ is much more complicated (see \cite[Proposition 10]{kabaya}).

\subsection{Trace functions}
\label{subsec:trace_functions}
In this subsection, we show some properties of trace functions on the character variety.
Since an element of $\PSLC$ determines a matrix only up to sign, we need to fix a lift to the $\SLC$-character variety.

The map defined by (\ref{eq:coplexFN_for_PSL}) naturally lifts to a map $\widetilde{\psi}_{FN} : \mathcal{D} \to X^{SL}_{par}(S)$ 
by regarding the target as the $\SLC$-character variety via the isomorphism (\ref{eq:SL_character_variety}).
We remark that the lift is natural with respect to the expression (\ref{eq:coplexFN_for_PSL}), 
but there is no canonical way to lift a subset of $X^{PSL}_{par}(S)$ to $X^{SL}_{par}(S)$.
The map $\widetilde{\psi}_{FN}$ can be extended to a holomorphic map $(\mathbb{C} \setminus \{0,1\}) \times (\mathbb{C} \setminus \{0\}) \to  X^{SL}_{par}(S)$.
For $\tau' = \tau + 2 \pi \sqrt{-1}$, we have
\[
\begin{split}
&\left( 2 \cosh ( \lambda / 2 ), \, 
\frac{2 \cosh ( \tau' / 2 )}{\tanh(\lambda/2)}, 
\, \frac{2 \cosh ( (\tau' +\lambda)/ 2 )}{\tanh(\lambda/2)} \right) \\
&=
\left( 2 \cosh ( \lambda / 2 ), \, 
-\frac{2 \cosh ( \tau / 2 )}{\tanh(\lambda/2)}, 
\, - \frac{2 \cosh ( (\tau +\lambda)/ 2 )}{\tanh(\lambda/2)} \right). 
\end{split}
\]
This means that $(\lambda, \tau)$ and $(\lambda, \tau + 2 \pi \sqrt{-1})$ are the same in $X^{PSL}_{par}(S)$ but not in $X^{SL}_{par}(S)$.

Fix an essential simple closed curve $p/q \in \widehat{\mathbb{Q}}$.
We take an element $g_{p/q} \in \pi_1(S)$ in the conjugacy class representing the curve $p/q$ (we do not care about the orientation). 
For $[\rho] \in X^{SL}_{par}(S)$, we define $\tr_{p/q}( [\rho] ) = \tr(\rho( g_{p/q}))$.
On $\mathcal{D}$, we define the trace function by $\tr_{p/q} (\widetilde{\psi}_{FN}(\lambda, \tau))$.
In particular, we have
\[
\tr_{1/0}(\lambda, \tau) = 2 \cosh \left( \frac{\lambda}{2} \right), \quad
\tr_{0/1}(\lambda, \tau) = \frac{2 \cosh ( \tau/2 ) }{\tanh(\lambda/2)}, \quad
\tr_{1/1}(\lambda, \tau) = \frac{2 \cosh ( (\tau+\lambda)/2 ) }{\tanh(\lambda/2)}
\]
from (\ref{eq:coplexFN_for_PSL}).
Considering the action of the Dehn twist (\ref{eq:dehn_twist_action}), we have
\begin{equation}
\label{eq:traces_of_integral_curves}
\tr_{n/1}(\lambda, \tau) = \frac{2 \cosh ( (\tau+n\lambda)/2 ) }{\tanh(\lambda/2)}.
\end{equation}
This can be also computed from trace identities.

\begin{proposition}
\label{prop:trace_fuction}
The trace function $\tr_{p/q}$ has the following form.
\begin{equation}
\label{eq:trace_function}
\tr_{p/q} (\lambda, \tau) = \sum_{k=-q}^q c_{p/q,k}(\lambda) \exp \left( \frac{k \tau}{2} \right) 
\end{equation}
where $c_{p/q,k}(\lambda)$ $(k=0, \pm 1, \cdots, \pm q)$ are some holomorphic functions. 
For $l \in \mathbb{R}_{>0}$,  $c_{p/q,k}(l)$ are real and $c_{p/q, \pm q}(l) \neq 0$.
\end{proposition}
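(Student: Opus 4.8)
The plan is to read everything off the explicit matrices (\ref{eq:explicit_matrices}), setting $e_1 = \exp(\lambda/2)$ and $\sqrt{t_1} = \exp(\tau/2)$, so that $t_1 = \exp(\tau)$. First I would record the $\tau$-dependence of the generators: every entry of $\rho(a)^{\pm 1}$ is independent of $\tau$ and rational in $e_1$, while writing $\rho(b) = \frac{1}{\sqrt{t_1}(e_1^2-1)} M(t_1)$ with $M$ of degree one in $t_1$ shows that every entry of $\rho(b)^{\pm 1}$ has the shape $\alpha(e_1)\exp(\tau/2) + \beta(e_1)\exp(-\tau/2)$ with $\alpha,\beta$ rational in $e_1$, hence holomorphic in $\lambda$ wherever $e_1^2 \neq 1$. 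I would then represent $p/q$ (assuming $q \geq 1$; the case $q=0$ is $\tr_{1/0} = 2\cosh(\lambda/2)$) by its cyclically reduced geodesic word $g_{p/q}$, which by the standard cutting-sequence description of simple closed curves on $S_{1,1}$ contains exactly $q$ letters $b^{\pm 1}$, all of the same sign, with total $a$-exponent equal to $p$. Expanding $\tr(\rho(g_{p/q}))$ as a product, each of the $q$ occurrences of $b$ raises the $\exp(\pm\tau/2)$-degree by at most one, so the trace is a Laurent polynomial in $\exp(\tau/2)$ supported in $[-q,q]$; this yields (\ref{eq:trace_function}) with holomorphic coefficients $c_{p/q,k}(\lambda)$. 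Reality at $\lambda=l>0$ is then immediate: there $e_1 = e^{l/2}\in\mathbb{R}$ and $e_1^2-1>0$, so all entries, and hence all $c_{p/q,k}(l)$, are real.

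The substantive point is $c_{p/q,\pm q}(l)\neq 0$, and here I would isolate the top-degree contribution. The coefficient of $\exp(\tau/2)$ in $\rho(b)$ is $B_+ = \frac{1}{e_1^2-1}\bigl(\begin{smallmatrix} e_1^2+1 & -2 \\ 0 & 0\end{smallmatrix}\bigr)$, which has rank one; I write $B_+ = \xi\eta^\top$ with $\xi = \bigl(\begin{smallmatrix}1\\0\end{smallmatrix}\bigr)$ and $\eta^\top = \frac{1}{e_1^2-1}(e_1^2+1,\,-2)$. Since every $b$ in $g_{p/q}$ has the same sign, the coefficient of $\exp(q\tau/2)$ is obtained by replacing each $b$ with $B_+$; grouping $g_{p/q} = a^{n_1}b\cdots a^{n_q}b$ and using $B_+ X B_+ = (\eta^\top X\xi)\,B_+$ repeatedly, this trace collapses to $\prod_{i=1}^{q}\eta^\top\rho(a)^{n_i}\xi$. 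A one-line computation gives $\eta^\top\rho(a)^{n}\xi = \frac{(e_1^2+1)e_1^{n}}{e_1^2-1}$, whence $c_{p/q,q}(\lambda) = e_1^{\,p}\bigl(\frac{e_1^2+1}{e_1^2-1}\bigr)^{q}$, which for $e_1 = e^{l/2}>1$ is strictly positive, hence nonzero. The bottom coefficient is handled identically with the $\exp(-\tau/2)$-part $B_- = \frac{1}{e_1^2-1}\bigl(\begin{smallmatrix} 2 & -2 \\ -(e_1^2-1) & e_1^2-1\end{smallmatrix}\bigr)$, which also has determinant zero; the same collapse gives $c_{p/q,-q}(\lambda) = e_1^{-p}\bigl(\frac{e_1^2+1}{e_1^2-1}\bigr)^{q}\neq 0$.

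I expect the only real obstacle to be the input about the word structure, namely that $g_{p/q}$ can be taken with exactly $q$ like-signed $b$-letters; without it the top $\exp(\tau/2)$-degree need not equal $q$, since a $b$ and a $b^{-1}$ would each raise the degree. This is standard, but I would either justify it via cutting sequences or circumvent it by an induction along the Farey tessellation: the trace identity $\tr(XY)=\tr X\,\tr Y-\tr(XY^{-1})$ gives $\tr_{p/q}=\tr_{r/s}\,\tr_{r'/s'}-\tr_{(r-r')/(s-s')}$ whenever $p/q$ is the mediant of Farey neighbors $r/s,\,r'/s'$, and since the subtracted term has $\exp(\tau/2)$-degree $|s-s'|<s+s'=q$, the top coefficients multiply, $c_{p/q,q}=c_{r/s,s}\,c_{r'/s',s'}$. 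Starting from the explicit base cases $1/0,\,0/1,\,1/1$, whose leading coefficients are nonzero reals, this proves the form, the reality, and the nonvanishing simultaneously by induction on $q$.
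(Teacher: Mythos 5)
Your proposal is correct, and your primary argument is genuinely different from the paper's. The paper proves the proposition entirely by the Farey induction that you sketch only as a fallback in your last paragraph: it takes the Keen--Series special words $g_{n/1}=a^{-n}b$ and $g_{(p+r)/(q+s)}=g_{r/s}\,g_{p/q}$ for Farey neighbors, applies $\tr(AB)+\tr(AB^{-1})=\tr(A)\tr(B)$ to get $\tr_{(p+r)/(q+s)}=\tr_{p/q}\tr_{r/s}-\tr_{(p-r)/(q-s)}$, and notes that the subtracted term has strictly smaller degree in $\exp(\tau/2)$, so the extreme coefficients multiply, $c_{(p+r)/(q+s),\pm(q+s)}=c_{p/q,\pm q}\,c_{r/s,\pm s}$; the base cases are the explicit formulas for $\tr_{1/0}$ and $\tr_{n/1}$. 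Your main route --- splitting $\rho(b)$ into the rank-one pieces $B_{\pm}$ and collapsing the extreme-degree traces to products of scalars $\eta^\top\rho(a)^{n_i}\xi$ --- is sound (I checked the collapse and the evaluation $\eta^\top\rho(a)^{n}\xi=\frac{(e_1^2+1)e_1^{n}}{e_1^2-1}$) and buys a closed form $c_{p/q,\pm q}(\lambda)=e_1^{\mp p}\bigl(\coth(\lambda/2)\bigr)^{q}$ valid wherever $e^{\lambda}\neq 0,1$, not just at real $l$; with the paper's convention $g_{n/1}=a^{-n}b$ the total $a$-exponent of $g_{p/q}$ is $-p$ rather than $p$, so your exponent of $e_1$ has the opposite sign, which is harmless for nonvanishing. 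The one genuine input is exactly the combinatorial fact you flag, that $g_{p/q}$ can be taken with precisely $q$ like-signed $b$-letters; this is standard and is immediate from the recursive structure of the special words (they are positive products of $a^{-n}b$ and $a^{-(n+1)}b$), so either of your two proposed justifications closes it. One small caveat on the fallback induction: the strict inequality $|s-s'|<s+s'$ fails when one neighbor is $1/0$, so the induction must be launched from the integer neighbors $(n/1,(n+1)/1)$ with all $\tr_{n/1}$ computed explicitly --- which is what the paper does and what your explicit base cases effectively supply.
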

\begin{proof}
This is proved by induction.
It is clear if $q=0$ or $1$ since $\tr_{1/0} = 2 \cosh(\lambda/2)$ and 
\[
\tr_{n/1}(\lambda, \tau) = \frac{2 \cosh(\frac{\tau + n \lambda}{2})}{\tanh(\lambda/2)} 
= \frac{\exp(n \lambda/2)}{\tanh(\lambda/2)} \exp \left( \frac{\tau}{2} \right) + \frac{\exp(-n\lambda/2)}{\tanh(\lambda/2)} \exp \left( \frac{-\tau}{2} \right).
\]

Following \cite[\S 3.1]{keen-series93}, any simple closed curve $p/q$ can be represented by a \emph{special word} in the generators $a, b \in \pi_1(S)$.
In the remaining of this proof, we always express a rational number $p/q$ by a unique pair of coprime integers $p$, $q$ with $q > 0$.
A pair of rational numbers $(p/q$, $r/s)$ is called \emph{Farey neighbors} if $ps -qr = \pm 1$.
If $(p/q, r/s)$ are Farey neighbors, then both pairs $(p/q, (p+r)/(q+s))$ and $((p+r)/(q+s), r/s)$ are again Farey neighbors.
Moreover if $p/q < r/s$, then $p/q < (p+r)/(q+s) < r/s$.
All rational numbers are obtained in a unique way by repeated application of the process $(p/q, r/s) \mapsto (p/q, (p+r)/(q+s))$ 
starting with integer neighbors $(n/1, (n+1)/1)$.
For an integer $n/1$, we define the special word $g_{n/1} = a^{-n} b$.
If $(p/q, r/s)$ are Farey neighbors with $p/q < r/s$, then define $g_{(p+r)/(q+s)} = g_{r/s} g_{p/q}$.

Since $\tr(AB) + \tr(AB^{-1}) = \tr(A) \tr(B)$ for any $A, B \in \SLC$, we have
\begin{equation}
\label{eq:trace_identity}
\tr_{\frac{p+r}{q+s}} = \tr_{p/q} \tr_{r/s} - \tr_{\frac{p-r}{q-s}}.
\end{equation}
for any Farey neighbors $(p/q, r/s)$ with $p/q < r/s$ and $q, s>0$.
Here we have $c_{(p+r)/(q+s), \pm(q+s)}(\lambda) =c_{p/q, \pm q}(\lambda) c_{r/s, \pm s}(\lambda)$.
Since all trace functions corresponding to rational numbers are obtained starting with integer neighbors $(n/1, (n+1)/1)$, this completes the proof.
\end{proof}

The next corollary means that if we fix $l>0$, there exists a real locus of $\lambda_{p/q}(l, \tau)$ 
between $\frac{(2j - 1)\pi}{q} < \mathrm{Im}(\tau) < \frac{(2j + 1)\pi}{q}$ $(j \in \mathbb{Z}, \, |j| < q/2)$ if $|\mathrm{Re}(\tau)|$ is large.
\begin{corollary}
\label{cor:existence_of_real_locus_at_large_real_part}
Fix a real number $l>0$ and a rational number $p/q$. 
There exists $T > 0$ satisfying the following condition: 
If $|t| > T$ and $j  \in \mathbb{Z}$ $(|j| <  q/2)$, 
there exists $b = b(t, j) \in (\frac{(2j-1)\pi}{q}, \frac{(2j+1)\pi}{q})$ such that
\[
\mathrm{Im} ( \lambda_{p/q}(l, t + b \sqrt{-1}) ) \equiv 0 \mod 2 \pi \mathbb{Z}.
\]
\end{corollary}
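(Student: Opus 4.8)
The plan is to reduce the statement about the complex length $\lambda_{p/q}$ to a statement about the trace function $\tr_{p/q}$, and then to locate a sign change of the imaginary part of the trace on each interval by an asymptotic analysis as $|t| \to \infty$. First I would translate the condition on $\lambda_{p/q}$. Writing $\lambda_{p/q} = l_{p/q} + a_{p/q}\sqrt{-1}$ and using $\tr_{p/q} = \pm 2 \cosh(\lambda_{p/q}/2)$, one computes
\[
\mathrm{Im}(\tr_{p/q}) = \pm 2 \sinh\!\left(\frac{l_{p/q}}{2}\right)\sin\!\left(\frac{a_{p/q}}{2}\right),
\]
so that, provided $l_{p/q} > 0$, the trace is real exactly when $a_{p/q} \in 2\pi\mathbb{Z}$, i.e. when $\mathrm{Im}(\lambda_{p/q}) \equiv 0 \bmod 2\pi\mathbb{Z}$. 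Thus it suffices to produce, for each admissible $j$, a value $b$ in the interval at which $\tr_{p/q}(l, t + b\sqrt{-1})$ is real and at which $l_{p/q} > 0$, so that the degenerate elliptic/parabolic case $l_{p/q} = 0$ is excluded.

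Next I would use the explicit shape of the trace function. By Proposition~\ref{prop:trace_fuction} the coefficients $c_{p/q,k}(l)$ are real and $c_{p/q,\pm q}(l) \neq 0$, so setting $\tau = t + b\sqrt{-1}$ gives
\[
\mathrm{Im}\bigl(\tr_{p/q}(l, t + b\sqrt{-1})\bigr) = \sum_{k=-q}^{q} c_{p/q,k}(l)\, e^{kt/2} \sin\!\left(\frac{kb}{2}\right).
\]
For $t > 0$ I would divide by the dominant factor $e^{qt/2}$; the result converges, uniformly for $b$ in the bounded range $(-\pi,\pi)$, to the leading term $c_{p/q,q}(l)\sin(qb/2)$ as $t \to +\infty$, since every remaining summand carries a factor $e^{(k-q)t/2} \to 0$. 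For $t < 0$ one divides instead by $e^{-qt/2}$ and the leading term is $-c_{p/q,-q}(l)\sin(qb/2)$; the argument is identical.

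Then I would run an intermediate value argument on each interval. At the endpoints $b = \frac{(2j \mp 1)\pi}{q}$ the leading factor satisfies $\sin(qb/2) = \mp(-1)^j$, so the two endpoint values of the leading term have opposite signs. Because $c_{p/q,\pm q}(l) \neq 0$ and the remainder tends to $0$ uniformly, there is a $T$ (independent of $j$, as only finitely many $j$ occur and $b$ stays bounded) such that for $|t| > T$ the function $\mathrm{Im}(\tr_{p/q}(l, t + b\sqrt{-1}))$ has opposite signs at the two endpoints; continuity yields a zero $b(t,j)$ in the open interval. Finally, since the leading term has a simple zero at the midpoint $2j\pi/q$, the zero $b(t,j)$ tends to $2j\pi/q$ as $|t| \to \infty$, where $\cos(qb/2) = (-1)^j \neq 0$ forces $\mathrm{Re}(\tr_{p/q})$, and hence $|\tr_{p/q}|$, to be exponentially large; this guarantees $l_{p/q} > 0$ and completes the passage back to $\mathrm{Im}(\lambda_{p/q}) \equiv 0 \bmod 2\pi\mathbb{Z}$.

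The hard part is not any single estimate but the bookkeeping that makes the threshold $T$ uniform in $j$, together with the verification that the zero we find genuinely records $\mathrm{Im}(\lambda_{p/q}) \equiv 0$ modulo $2\pi$ rather than a spurious coincidence at an elliptic parameter. Both issues are resolved by controlling the location of the zero near the interval midpoint, where the real part of the trace is certified to be large; there the translation length is bounded below and the degenerate case cannot occur.
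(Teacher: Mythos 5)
Your proof is correct and follows essentially the same route as the paper's: expand $\mathrm{Im}(\tr_{p/q}(l,t+b\sqrt{-1}))$ via Proposition~\ref{prop:trace_fuction}, detect a sign change at the endpoints $b=\frac{(2j\pm1)\pi}{q}$ once $|t|$ is large enough that the $k=\pm q$ term dominates, apply the intermediate value theorem, and translate the vanishing of $\mathrm{Im}(\tr_{p/q})$ back into $\mathrm{Im}(\lambda_{p/q})\equiv 0 \bmod 2\pi\mathbb{Z}$. The only point where you go beyond the paper is in explicitly excluding the degenerate case $l_{p/q}=0$ (where the trace is real yet $\mathrm{Im}(\lambda_{p/q})\not\equiv 0$) by locating the zero near the midpoint $2j\pi/q$, where $|\mathrm{Re}(\tr_{p/q})|$ is exponentially large; this is a legitimate tightening of the paper's final equivalence rather than a different argument.
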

\begin{proof}
Let $\tau = t + b \sqrt{-1}$. From (\ref{eq:trace_function}), we have
\[
\mathrm{Im} ( \tr_{p/q} (l, \tau) ) 
= \sum_{k=-q}^q c_{p/q,k}(l) \exp \left( \frac{k t}{2} \right) \sin \left( \frac{k b}{2} \right).
\]
Thus if $|t|$ is sufficiently large, the signs of $\mathrm{Im}(\tr_{p/q}(l, t + \frac{(2j \pm 1)\pi}{q} \sqrt{-1}))$ are different.
Therefore there exists $b = b(t,j) \in (\frac{(2j-1)\pi}{q}, \frac{(2j+1)\pi}{q})$ such that $\mathrm{Im}(\tr_{p/q}(l, t + b \sqrt{-1})) = 0$.
If we let $\lambda_{p/q} = l_{p/q} + \sqrt{-1} \, a_{p/q}$, we have
\[
\tr_{p/q}/2 = \cosh(\lambda_{p/q}/2) = \cosh(l_{p/q}/2) \cos(a_{p/q}/2) + \sqrt{-1} \sinh(l_{p/q}/2) \sin(a_{p/q}/2).
\]
Thus $\mathrm{Im}(\tr_{p/q}) = 0$ if and only if $\mathrm{Im}(\lambda_{p/q}) = a_{p/q} \equiv 0 \mod 2 \pi \mathbb{Z}$.
\end{proof}

\section{Linear slices}
\label{sec:linear_slices}
In this section, we investigate some properties of the linear slice $X(l)$ for a real number $l > 0$,
then we give our first proof of Theorem \ref{komori_yamashita_theorem}.

\subsection{Linear slices of real length}
\label{subsec:linear_slices_of_real_length}
In the following of this paper, we regard $\{ (l,\tau) \mid -\pi \leq \mathrm{Im}(\tau) < \pi \}$ as $X_{1/0}(l)$ 
via the map $\psi_{FN}$ of Proposition \ref{prop:complexFN}.
\begin{proposition}
\label{prop:not_on_pm_pi}
For any $l>0$,  $(l, \tau)$ is not quasi-Fuchsian if $|\mathrm{Im}(\tau)| = \pi$. In particular,
\[
\mathcal{QF}(l) \subset \{l\} \times \{ \tau \mid - \pi < \mathrm{Im}(\tau) < \pi \}.
\]
\end{proposition}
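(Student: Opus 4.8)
The plan is to exploit a complex-conjugation symmetry that is present precisely when $\mathrm{Im}(\tau)=\pi$. Write $\tau = t + \pi\sqrt{-1}$ with $t\in\mathbb{R}$, and set $[\rho]=\psi_{FN}(l,\tau)$. Since $(l,\tau)$ and $(l,\tau-2\pi\sqrt{-1})$ define the same point of $X^{PSL}_{par}(S)$, the case $\mathrm{Im}(\tau)=-\pi$ reduces to this one (replace $\tau$ by $\tau+2\pi\sqrt{-1}$), so it suffices to treat $\mathrm{Im}(\tau)=\pi$. First I would show that $[\rho]$ has real $\PSLC$-character, i.e. that $\overline{\rho}$ is conjugate to $\rho$. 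For this I use Proposition \ref{prop:trace_fuction}: for $l>0$ the coefficients $c_{p/q,k}(l)$ are real, so for every $p/q$
\[
\overline{\tr_{p/q}(l,\tau)} = \sum_{k} c_{p/q,k}(l)\, e^{k\bar\tau/2} = \tr_{p/q}(l,\bar\tau),
\]
where $\bar\tau=\tau-2\pi\sqrt{-1}$ represents the same $\PSLC$-point as $\tau$. Applying this to $a,b,ab$ (the curves $1/0,0/1,1/1$) and using the isomorphism (\ref{eq:SL_character_variety}), the $\PSLC$-characters of $\overline{\rho}$ and $\rho$ coincide; as $\rho$ is irreducible they are conjugate.

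Next I would convert $\overline{\rho}\cong\rho$ into a geometric statement. Write $\overline{\rho(\gamma)}=\iota\,\rho(\gamma)\,\iota$ for the complex conjugation $\iota:z\mapsto\bar z$ on $\mathbb{C}P^1$, and let $g\in\PSLC$ realize $\overline{\rho}=g\rho g^{-1}$. Then $\sigma:=g^{-1}\iota$ is an anti-holomorphic Möbius transformation commuting with every $\rho(\gamma)$. Its square $\sigma^2$ is holomorphic and centralizes the non-elementary group $\Gamma=\rho(\pi_1 S)$, so $\sigma^2=\mathrm{id}$ and $\sigma$ is an anti-holomorphic involution. Such an involution is conjugate either to $z\mapsto\bar z$ (fixing a round circle) or to the fixed-point-free antipodal map $z\mapsto -1/\bar z$; in the two cases $\sigma$ forces $\Gamma$ into the real form $\PSLR$ or into $\mathrm{PSU}(2)$, respectively. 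Because $\tr\rho(a)=2\cosh(l/2)>2$, the element $\rho(a)$ is hyperbolic and cannot lie in the (elliptic) group $\mathrm{PSU}(2)$; hence $\sigma$ is of reflection type and $\Gamma$ is conjugate into $\PSLR$.

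Finally I would derive the contradiction. If $(l,\tau)$ were quasi-Fuchsian, then $\Gamma$ would be a discrete subgroup of $\PSLR$, i.e. a Fuchsian group; its limit set, being a Jordan curve contained in the round circle fixed by $\sigma$, would equal that circle, so $\rho$ would be Fuchsian. But Fuchsian representations have real length and real twist parameter, i.e. $\mathrm{Im}(\tau)=0$, contradicting $\mathrm{Im}(\tau)=\pi$. Thus $(l,\tau)$ is not quasi-Fuchsian, and the inclusion $\QF(l)\subset\{l\}\times\{\tau \mid -\pi<\mathrm{Im}(\tau)<\pi\}$ follows. The main obstacle is the middle step: turning the algebraic identity $\overline{\rho}\cong\rho$ into the clean dichotomy ``$\PSLR$ or $\mathrm{PSU}(2)$'' and making the centralizer and irreducibility bookkeeping rigorous, together with the $\SLC$-versus-$\PSLC$ sign accounting in the character computation; by comparison the trace evaluation itself is routine.
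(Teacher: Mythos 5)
Your argument is correct in outline, and it reaches the key geometric fact by a genuinely different route than the paper. The paper's proof is a direct computation: at $\mathrm{Im}(\tau)=\pm\pi$ one has $t_1=e^{\tau}=-e^{t}<0$, so the explicit matrices (\ref{eq:explicit_matrices}) are real up to a scalar factor of $\sqrt{-1}$ and hence visibly preserve $\mathbb{R}P^1$; from there the paper concludes exactly as you do that a quasi-Fuchsian group preserving a round circle must be Fuchsian, and then derives a contradiction. You instead deduce abstractly from Proposition \ref{prop:trace_fuction} that the $\PSLC$-character is fixed by complex conjugation, upgrade this to $\overline{\rho}\cong\rho$ by irreducibility (which you should note holds because $X_{par}(S)$ consists of irreducible classes, or because you may assume $\rho$ quasi-Fuchsian for contradiction), and then run the standard real-structure dichotomy ($\mathrm{Fix}(\sigma)$ a circle versus the antipodal map, the latter excluded since $\rho(a)$ is hyperbolic). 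This costs more machinery (character rigidity, classification of anti-holomorphic involutions) but is more conceptual and does not depend on the particular matrix normalization; the paper's computation is a one-liner but is tied to (\ref{eq:explicit_matrices}). One small imprecision: the centralizer of $z\mapsto\bar z$ in $\PSLC$ is $\mathrm{PGL}(2,\mathbb{R})$, not $\PSLR$; this is harmless for your purposes since all you use is invariance of the circle $\mathrm{Fix}(\sigma)$, which forces $\Lambda(\Gamma)$ into that circle and hence (being a Jordan curve) to equal it.

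The one place where you lean on an unproved assertion is the last step, ``Fuchsian representations have real length and real twist parameter.'' The paper does not take this as a black box; it closes the argument with a concrete computation, namely $\tr_{0/1}(l,t\pm\pi\sqrt{-1})=\pm 2\sqrt{-1}\sinh(t/2)/\tanh(l/2)$, which is purely imaginary, so the $0/1$-curve is not purely hyperbolic. You can make your step equally concrete: if $\Gamma$ were conjugate into $\PSLR$ then $\tr_{0/1}$ and $\tr_{1/1}$ would both be real, and writing $\tau=t+b\sqrt{-1}$ this forces $\sinh(t/2)\sin(b/2)=\sinh((t+l)/2)\sin(b/2)=0$, hence $\sin(b/2)=0$ and $b\equiv 0 \bmod 2\pi$, contradicting $b=\pm\pi$. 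I would add this, since as written the assertion you invoke is of comparable depth to the statement being proved. Finally, note that both your proof and the paper's quietly pass over the possibility that a group preserving a round circle contains elements acting as glide reflections on the corresponding plane (i.e.\ lies in $\mathrm{PGL}(2,\mathbb{R})$ but not $\PSLR$, which is exactly what purely imaginary traces suggest at $\mathrm{Im}(\tau)=\pi$); ruling this out uses that the peripheral element $[a,b]$ is parabolic, so the quotient cannot be a finite-area non-orientable surface with that peripheral structure. This is a shared, minor gap, not one specific to your write-up.
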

\begin{proof}
Let $\tau = t \pm \pi \sqrt{-1}$. 
Recall the explicit representation given by (\ref{eq:explicit_matrices}). 
In this situation, we have $e_1 = \exp(l/2)$ and $t_1 = \exp(\tau) = -\exp(t)$, 
thus $\rho(a)$ and $\rho(b)$ in (\ref{eq:explicit_matrices}) preserve $\mathbb{R}P^1 = \mathbb{R} \cup \{\infty\}$ as M\"{o}bius transformations.
Thus if we assume that $\rho$ is quasi-Fuchsian, it must be Fuchsian.
On the other hand, since $\tr_{0/1} (l, \tau) = \frac{2 \cosh(t/2 \pm (\pi \sqrt{-1})/2 )}{\tanh(l/2)} = \frac{\pm 2 \sinh(t/2)}{\tanh(l/2)}\sqrt{-1}$,  
the holonomy along $0/1$ is not purely hyperbolic.
This contradicts the fact that $\rho$ is Fuchsian.
\end{proof}

Since the real line corresponds to all Fuchsian representations satisfying $\lambda_{1/0} \equiv l$, $\mathcal{QF}(l)$ contains $\mathbb{R}$.
Moreover, Parker and Parkkonen proved the following theorem by giving an explicit fundamental domain using Maskit's combination.
\begin{theorem}[Theorem 4.1 of \cite{parker-parkkonen}]
\label{thm:parker-parkkonen}
If $(l, \tau)$ satisfies
\[
|\mathrm{Im}(\tau)| < 2 \arccos( \tanh(l/2) ), 
\]
then $(l, \tau)$ is quasi-Fuchsian.
The representation corresponds to $(l, n l + 2 \arccos( \tanh(l/2) ) \sqrt{-1})$ is a cusp group for $-n/1$-curve.
\end{theorem}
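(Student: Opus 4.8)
The plan is to realize the representation $\rho=\psi_{FN}(l,\tau)$ as an HNN extension of a quasi-Fuchsian three-holed sphere group via \emph{Maskit's second combination theorem}, and to exhibit an explicit fundamental domain whose faces remain disjoint precisely when $|\mathrm{Im}(\tau)|<2\arccos(\tanh(l/2))$. Cutting $S$ along the $1/0$-curve $C_a$ produces a three-holed sphere $P$ whose two new boundary curves $\partial_1,\partial_2$ are both freely homotopic to $a$, and the curve $b$ glues $\partial_1$ to $\partial_2$. Thus $\pi_1(S)$ is the HNN extension of $\pi_1(P)$ along the cyclic edge groups $\langle\rho(\partial_1)\rangle$ and $\langle\rho(\partial_2)\rangle$, with stable letter $\rho(b)$ conjugating one edge group to the other.

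First I would normalize $\rho(a)$ using the explicit matrices (\ref{eq:explicit_matrices}) (or (\ref{eq:coplexFN_for_PSL})) so that its axis and isometric data are transparent, and write down the isometric spheres of $\rho(b)^{\pm1}$; their radii are governed by $\mathrm{Re}(\tau)$, while $\mathrm{Im}(\tau)$ controls how the two copies of $C_a$ are bent against one another. I would then build the candidate fundamental domain for $\Gamma=\rho(\pi_1(S))$ as the region exterior to the isometric spheres of $\rho(b)^{\pm1}$ and bounded by two $\langle\rho(a)\rangle$-related faces forming a fundamental domain for the action of $\langle\rho(a)\rangle$ on a neighborhood of its axis. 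To apply the combination theorem I must verify that each bounding disk is precisely invariant under its edge group inside the pants group $\rho(\pi_1(P))$, and that $\rho(b)$ carries the exterior of one precisely invariant disk onto the interior of the other. This reduces discreteness, faithfulness, and the HNN structure to a disjointness and nesting statement for finitely many faces.

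The hard part will be the disjointness computation and the extraction of the sharp bound: I expect the binding constraint to come from the angle at which the two faces associated with the two sides of $C_a$ first touch, and a direct trigonometric computation should show that this occurs exactly at $|\mathrm{Im}(\tau)|=2\arccos(\tanh(l/2))$, independently of $\mathrm{Re}(\tau)$ (consistent with the fact that moving $\mathrm{Re}(\tau)$ is a Fenchel--Nielsen twist, which preserves quasi-Fuchsianness). Once the combination theorem applies, $\Gamma$ is discrete, the map $\pi_1(S)\to\Gamma$ is an isomorphism, and the combined domain of discontinuity has two invariant topological disks, so $\rho$ is quasi-Fuchsian. Together with the care needed at the parabolic fixed point coming from the puncture, this verification of Maskit's precise-invariance hypotheses is where the real work lies.

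Finally, for the cusp statement I would identify the degeneration at the boundary value. At $\tau=n\,l+2\arccos(\tanh(l/2))\sqrt{-1}$ the two critical faces become tangent at a single point, which must be fixed by a conjugate of $\rho(g_{-n/1})$, forcing that element to be parabolic; this is confirmed directly from the trace formula (\ref{eq:traces_of_integral_curves}), since
\[
\tr_{-n/1}\bigl(l,\,n\,l+s\sqrt{-1}\bigr)=\frac{2\cos(s/2)}{\tanh(l/2)}=2
\quad\Longleftrightarrow\quad s=2\arccos(\tanh(l/2)).
\]
Thus $\rho(g_{-n/1})$ is accidental parabolic, the $-n/1$-curve is pinched, and $\Gamma$ is the corresponding cusp group on the boundary of the standard component. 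The whole family of such cusps is then the orbit of the $n=0$ case (where $0/1$ is pinched) under the Dehn twist $D_{1/0}$, whose action $\tau\mapsto\tau+l$ is recorded in (\ref{eq:dehn_twist_action}).
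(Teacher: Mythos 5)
You have correctly identified the method that Parker and Parkkonen actually use (and that this paper explicitly attributes to them): an HNN/Maskit second combination argument with an explicit fundamental domain, plus the trace computation $\tr_{-n/1}(l, nl+s\sqrt{-1}) = 2\cos(s/2)/\tanh(l/2)$ confirming the accidental parabolic at $s = 2\arccos(\tanh(l/2))$. Note, however, that the paper itself gives \emph{no} proof of this theorem --- it is imported wholesale from \cite{parker-parkkonen}, and the only verification the paper performs is precisely that one-line trace check for the cusp claim, which you reproduce correctly. So the comparison is really between your outline and the external source, and there your proposal has a genuine gap: the entire quantitative content of the theorem is the constant $2\arccos(\tanh(l/2))$, and your argument never produces it. You write down no isometric spheres, no precise-invariance inequality, and no disjointness computation; the step ``a direct trigonometric computation should show that this occurs exactly at $|\mathrm{Im}(\tau)|=2\arccos(\tanh(l/2))$'' is exactly the theorem, asserted rather than proved. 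As it stands this is a (correct) plan for a proof, not a proof.

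A secondary but substantive error: your parenthetical claim that the critical height is independent of $\mathrm{Re}(\tau)$ ``because moving $\mathrm{Re}(\tau)$ is a Fenchel--Nielsen twist, which preserves quasi-Fuchsianness'' is false. Changing $\mathrm{Re}(\tau)$ changes the hyperbolic structure being grafted, not merely a quasiconformal marking, and membership in $\mathcal{QF}(l)$ is \emph{not} invariant under this flow --- if it were, $\mathcal{QF}(l)$ would be a union of full horizontal lines, contradicting the cusps at $\tau = nl + 2\arccos(\tanh(l/2))\sqrt{-1}$ visible in Figure \ref{fig:slices} and the very existence of rational pleating rays terminating at cusp groups. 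What is true is that the Parker--Parkkonen sufficient condition is uniform in $\mathrm{Re}(\tau)$ and is sharp only at $\mathrm{Re}(\tau) \equiv 0 \bmod l$; elsewhere the actual boundary of $\mathcal{QF}(l)$ lies strictly higher. You should either remove that justification or replace it with the correct statement that the \emph{bound} (not the failure locus) happens to be independent of the twist.
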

At least, it is easy to check that the trace of $-n/1$-curve at $(l, n l + 2 \arccos( \tanh(l/2) ) \sqrt{-1})$ is $2$ from (\ref{eq:traces_of_integral_curves}).
\begin{proposition}
\label{prop:elliptic_loci}
If $b \in \mathbb{R}$ satisfies
\begin{equation}
\label{eq:integral_curves_to_be_elliptic}
2 \arccos(\tanh(l/2)) < b < \pi,
\end{equation}
then $(l, nl + b\sqrt{-1})$ does not corresponds to a quasi-Fuchsian representation.
If
\begin{equation}
\label{eq:half_integral_curves_to_be_elliptic}
\arccos \left( (\tanh(l/2))^2 - \frac{1}{\cosh(l/2)} \right) < b < \pi,
\end{equation}
then $(l, (n+1/2)l + b\sqrt{-1})$ does not corresponds to a quasi-Fuchsian representation.
\end{proposition}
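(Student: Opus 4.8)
The plan is to show that at each listed point some essential simple closed curve has elliptic holonomy, which a quasi-Fuchsian representation cannot have; hence the point is not quasi-Fuchsian. The underlying principle is standard: a quasi-Fuchsian group is discrete and faithful, so it is isomorphic to the torsion-free group $\pi_1(S)$ and contains no elliptic elements. Equivalently, for every $p/q \in \widehat{\mathbb{Q}}$ the (fixed $\SLC$-)trace $\tr_{p/q}$ is either non-real or real with $|\tr_{p/q}| \geq 2$. Thus it suffices to exhibit, at the given point, a curve whose trace is real and lies in $(-2,2)$. Moreover, since $\mathcal{QF}(S)$ is mapping-class-group invariant and, by (\ref{eq:dehn_twist_action}), $D_{1/0}$ acts by $(\lambda,\tau) \mapsto (\lambda, \tau + \lambda)$ fixing $\mathrm{Im}(\tau)$, all points $(l, nl + b\sqrt{-1})$ (resp. all $(l, (n+1/2)l + b\sqrt{-1})$) with a fixed $b$ lie in a single $D_{1/0}$-orbit, so it is enough to treat one representative of each family.

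For the first statement I would take the representative $n=0$ and use the formula for $\tr_{0/1}$, giving $\tr_{0/1}(l, b\sqrt{-1}) = \frac{2\cos(b/2)}{\tanh(l/2)}$, which is real and positive for $0 < b < \pi$. A direct inequality shows that this value is $<2$ exactly when $\cos(b/2) < \tanh(l/2)$, i.e. when $b > 2\arccos(\tanh(l/2))$; together with positivity this places the trace in $(0,2)$, so the $0/1$-curve is elliptic and the point is not quasi-Fuchsian. (Evaluating $\tr_{-n/1}$ directly from (\ref{eq:traces_of_integral_curves}) at $(l, nl + b\sqrt{-1})$ gives the same value $\frac{2\cos(b/2)}{\tanh(l/2)}$ for every $n$, so the reduction can also be bypassed.)

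For the second statement I would take the representative $\tau = -l/2 + b\sqrt{-1}$ (the $n=-1$ point) and the curve $1/2$. Applying the trace identity (\ref{eq:trace_identity}) in the form $\tr_{1/2} = \tr_{0/1}\tr_{1/1} - \tr_{1/0}$ and a product-to-sum simplification yields $\tr_{1/2}(l, \tau) = \frac{2\cosh(\tau + l/2) + 2\cosh(l/2)}{\tanh^2(l/2)} - 2\cosh(l/2)$. At $\tau = -l/2 + b\sqrt{-1}$ the argument $\tau + l/2 = b\sqrt{-1}$ makes $\cosh(\tau + l/2) = \cos(b)$, so the trace becomes real. Using the identity $\cosh(l/2)(\tanh^2(l/2) - 1) = -1/\cosh(l/2)$ one then finds $\tr_{1/2} < 2$ precisely when $\cos(b) < \tanh^2(l/2) - 1/\cosh(l/2)$, i.e. $b > \arccos(\tanh^2(l/2) - 1/\cosh(l/2))$. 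This makes the $1/2$-curve elliptic, so the point again fails to be quasi-Fuchsian.

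The step needing the most care is this second computation: besides deriving the upper bound $\tr_{1/2} < 2$ that produces the stated threshold, I must confirm the lower bound $\tr_{1/2} > -2$ throughout $0 < b < \pi$, so that the holonomy is genuinely elliptic rather than hyperbolic on the far side. This reduces to the inequality $\tanh^2(l/2) + 1/\cosh(l/2) > 1$, which holds for all $l>0$. The conceptual input---that an elliptic element obstructs quasi-Fuchsianness---is routine, so the real work is the hyperbolic-trigonometric bookkeeping and the correct identification of the curve via the $D_{1/0}$-reduction.
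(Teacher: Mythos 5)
Your proposal is correct and follows essentially the same route as the paper: in both cases one exhibits a simple closed curve (an integral curve for the first family, a half-integral curve for the second) whose trace at the given point is real and lies in $(-2,2)$, so the holonomy is elliptic and the representation cannot be quasi-Fuchsian; your Dehn-twist normalization to $n=0$ (resp.\ $n=-1$) merely repackages the paper's direct computation of $\tr_{-n/1}$ and $\tr_{-n-1/2}$, which yields the same $n$-independent values, and your verification of the lower bound $\tr > -2$ matches the paper's.
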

We do not use the second assertion in later arguments. 
From Theorem \ref{thm:parker-parkkonen}, for any $l>0$ we have 
\[
2 \arccos(\tanh(l/2)) <\arccos \left( (\tanh(l/2))^2 - \frac{1}{\cosh(l/2)} \right).
\]
\begin{proof}
In the first case, since
\[
\tr_{-n/1}(l, nl + b\sqrt{-1}) = \frac{2\cosh(\frac{b\sqrt{-1}}{2})}{\tanh(l/2)} = \frac{2\cos(b/2)}{\tanh(l/2)}, 
\]
we have $0 < \tr_{-n/1}(l, nl + b\sqrt{-1}) < 2$ if (\ref{eq:integral_curves_to_be_elliptic}) is satisfied.
This means that the holonomy along $-n/1$-curve is elliptic, thus the representation is not in $\mathcal{QF}$.

In the second case, the trace function of $(-n-1/2)$-curve is
\[
\tr_{-n-1/2} = \tr_{-(n+1)/1} \tr_{-n/1} - \tr_{1/0} 
= \frac{2 \cosh(\frac{\tau - (n+1)\lambda}{2})}{\tanh(\frac{\lambda}{2})} \frac{2 \cosh(\frac{\tau - n \lambda}{2})}{\tanh(\frac{\lambda}{2})} - 2 \cosh(\lambda/2),
\]
thus we have
\[
\begin{split}
\tr_{-n-1/2}(l, (n+1/2)l + b\sqrt{-1})
=& \frac{2 \cosh(\frac{-l/2+\sqrt{-1}b}{2})}{\tanh(l/2)} \frac{2 \cosh(\frac{l/2+\sqrt{-1}b}{2})}{\tanh(l/2)} - 2 \cosh(l/2) \\
=& \frac{2}{(\tanh(l/2))^2} (\cosh(l/2) + \cos(b)) - 2 \cosh(l/2).
\end{split}
\]
Since $( \tanh(l/2) )^2 (\cosh(l/2)-1) < \cosh(l/2)-1$, we have  
\[
-2 < \frac{2}{( \tanh(l/2) )^2} (\cosh(l/2)-1) - 2 \cosh(l/2).
\]
Thus if 
\[
\frac{2}{( \tanh(l/2) )^2} (\cosh(l/2)-1) - 2 \cosh(l/2) < \tr_{-n-1/2} (l, (n+1/2)l + b\sqrt{-1}) < 2,
\]
the holonomy along $(-n-1/2)$-curve is elliptic, thus the corresponding representation is not quasi-Fuchsian. This is equivalent to 
\[
-1 < \cos(b) < (\tanh(l/2))^2(1 + \cosh(l/2)) - \cosh(l/2).
\]
Since the right hand side is equal to $(\tanh(l/2))^2  - \frac{1}{\cosh(l/2)}$, this is equivalent to (\ref{eq:half_integral_curves_to_be_elliptic}).
\end{proof}

\begin{corollary}
\label{cor:infinitely_many_by_dehn_twists}
If $\mathcal{QF}(l)$ has a non-standard component, then there are infinitely many.
\end{corollary}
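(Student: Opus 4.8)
The plan is to use the Dehn twist $D_{1/0}$ to manufacture, from a single non-standard component, an infinite family of pairwise disjoint ones. By Proposition \ref{prop:complexFN} I identify $X(l)$ with the strip $\{\tau \mid -\pi \le \mathrm{Im}(\tau) < \pi\}$, and by (\ref{eq:dehn_twist_action}) with $\lambda = l$ the twist $D_{1/0}$ acts on this strip as the real translation $\tau \mapsto \tau + l$. Since $D_{1/0}$ is realized by a homeomorphism of $S$ fixing the curve $1/0$, it preserves quasi-Fuchsianness and the value $\lambda_{1/0} = l$, so it restricts to a homeomorphism of $\mathcal{QF}(l)$ permuting its components. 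As the real axis (the Fuchsian locus) is mapped to itself, the standard component is fixed, hence $D_{1/0}$ carries any non-standard component again to a non-standard one. It therefore suffices to show that a non-standard component $U$ is confined to a bounded range of $\mathrm{Re}(\tau)$: the translates $D_{1/0}^n(U)$ will then eventually be disjoint from $U$, forcing infinitely many distinct components.

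Write $c = 2 \arccos(\tanh(l/2))$. First I would confine $U$ to one of the two outer bands. By Theorem \ref{thm:parker-parkkonen} the open horizontal strip $\{|\mathrm{Im}(\tau)| < c\}$ is entirely quasi-Fuchsian and connected, so it lies in a single component, namely the standard one; and by Proposition \ref{prop:not_on_pm_pi} the whole of $\mathcal{QF}(l)$ stays inside $\{|\mathrm{Im}(\tau)| < \pi\}$. Since $U$ is open, connected, and disjoint from the central strip (any point of $U$ on the line $\mathrm{Im}(\tau)=c$ would drag a neighborhood, hence central-strip points, into $U$), it must lie entirely in $\{c < \mathrm{Im}(\tau) < \pi\}$ or entirely in $\{-\pi < \mathrm{Im}(\tau) < -c\}$.

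The key step is to cut each band into vertical strips of width $l$ by non-quasi-Fuchsian walls. Proposition \ref{prop:elliptic_loci}, via the identity $\tr_{-n/1}(l, nl + b\sqrt{-1}) = 2\cos(b/2)/\tanh(l/2)$ (which lies in $(0,2)$ for $c < |b| < \pi$, so covers the lower band by the same computation), shows that for every $n \in \mathbb{Z}$ the vertical segment $\{\mathrm{Re}(\tau) = nl\}$ inside either band consists of elliptic, hence non-quasi-Fuchsian, points. These segments span the full height of the band, so removing them splits each band into the open strips $\{ml < \mathrm{Re}(\tau) < (m+1)l\}$. As $U$ is connected and avoids all these walls, it lies in a single such strip, whence $\mathrm{Re}(\tau)$ varies by less than $l$ on $U$. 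Consequently $D_{1/0}^n(U)$ lies in the strip shifted by $nl$, and these strips are pairwise disjoint for distinct $n$; thus $\{D_{1/0}^n(U)\}_{n \in \mathbb{Z}}$ are infinitely many distinct non-standard components. I expect the crux to be precisely this confinement argument --- checking that the elliptic loci of Proposition \ref{prop:elliptic_loci} furnish vertical non-quasi-Fuchsian walls spanning the full height of the bands --- whereas the permutation of components by $D_{1/0}$ and the reduction to boundedness are immediate.
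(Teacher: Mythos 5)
Your proposal is correct and follows the paper's own argument: the paper likewise combines Theorem \ref{thm:parker-parkkonen} and Proposition \ref{prop:elliptic_loci} to confine a non-standard component to a strip $\{nl < \mathrm{Re}(\tau) < (n+1)l\}$ and then translates it by the Dehn twist action $\tau \mapsto \tau + l$. You have merely filled in the details (the band confinement and the elliptic ``walls'') that the paper leaves implicit.
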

\begin{proof}
By Theorem \ref{thm:parker-parkkonen} and Proposition \ref{prop:elliptic_loci}, 
a non-standard component is contained in a region $\{ \tau \mid n l < \mathrm{Re} (\tau) < (n+1) l \}$ for some integer $n$.
Since the Dehn twist along the curve $1/0$ acts on $X(l)$ as $\tau \mapsto \tau + l $ (see (\ref{eq:dehn_twist_action})), 
if there exists a non-standard component, there are infinitely many.
\end{proof}

\subsection{BM slices and pleating rays}
The component of $\mathcal{QF}(l)$ containing Fuchsian representations (equivalently, containing the real line) is relatively well-understood by the theory of 
pleating rays developed by Keen and Series \cite{keen-series04}.
We call this component the \emph{BM-slice} and denote it by $BM$.
We also call $BM$ the \emph{standard component} following the paper \cite{komori-yamashita}.
Theorem \ref{thm:parker-parkkonen} implies:
\begin{corollary}[Parker-Parkkonen]
\label{cor:BM_contains_parker_parkkonen_region}
$BM$ contains $\{ \tau \mid  |\mathrm{Im}(\tau)| < 2 \arccos(\tanh(l/2)) \}$.
\end{corollary}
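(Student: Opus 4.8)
The plan is to deduce this entirely from Theorem \ref{thm:parker-parkkonen}, which has already done all the analytic work. That theorem guarantees that every point of the open horizontal strip
\[
U = \{ \tau \mid |\mathrm{Im}(\tau)| < 2 \arccos(\tanh(l/2)) \}
\]
is quasi-Fuchsian, so that $U \subset \mathcal{QF}(l)$. The only thing left to establish is that $U$ lies inside a \emph{single} component of $\mathcal{QF}(l)$, and that this component is the standard one $BM$.

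First I would observe that $U$ is connected: it is an open horizontal strip in the $\tau$-plane, hence convex and in particular path-connected. Since the components of $\mathcal{QF}(l)$ are by definition its connected components, any connected subset of $\mathcal{QF}(l)$ must be contained in one of them; applying this to $U$ shows that $U$ lies in a single component.

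Next I would identify that component as $BM$. For every $l > 0$ we have $\tanh(l/2) \in (0,1)$, so $\arccos(\tanh(l/2)) \in (0, \pi/2)$ and in particular $2 \arccos(\tanh(l/2)) > 0$. Hence the real axis $\{ \mathrm{Im}(\tau) = 0 \}$ satisfies the defining inequality, i.e. $\mathbb{R} \subset U$. As the real line consists of Fuchsian representations and $BM$ is by definition the component containing them, the component containing $U$ is forced to be $BM$. Combining the two steps gives $U \subset BM$, which is the assertion.

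There is essentially no genuine obstacle here, since the hard inclusion $U \subset \mathcal{QF}(l)$ is exactly Theorem \ref{thm:parker-parkkonen}. The only minor point worth confirming is that $U$ really does embed inside the fundamental strip $-\pi < \mathrm{Im}(\tau) < \pi$ used to identify $X_{1/0}(l)$ in Proposition \ref{prop:not_on_pm_pi}; this holds because $2 \arccos(\tanh(l/2)) < \pi$, so no wrap-around of the imaginary part of $\tau$ occurs and $U$ is an honestly embedded strip in $X_{1/0}(l)$.
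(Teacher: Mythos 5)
Your argument is correct and is exactly the (implicit) reasoning behind the paper's deduction of this corollary from Theorem \ref{thm:parker-parkkonen}: the strip lies in $\mathcal{QF}(l)$ by that theorem, it is connected, and it contains the real (Fuchsian) locus, hence lies in the standard component $BM$. The paper offers no further proof, so your write-up simply makes the same one-line deduction explicit.
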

Thus if $l$ is small, $BM$ is close to $\{ \tau \mid -\pi < \mathrm{Im}(\tau) < \pi \}$ 
and getting thinner and thinner as $l \to \infty$.

We define two subsets of $\mathcal{QF}(l)$ by
\[
BM^{\pm} = \{ \rho \in \mathcal{QF} \mid [pl^{\pm}(\rho)] = 1/0, \quad \lambda_{1/0}(\rho) = l \},
\]
where $[pl^{\pm}(\rho)] = 1/0$ means that the projective classes of $pl^{\pm}(\rho)$ and $1/0$ are equivalent.
As the notation suggests, the following holds (Theorem 5 and \S 10 of \cite{keen-series04}).
\begin{theorem}[Keen-Series]
\label{thm:BM_is_foliated_by_pleating_rays}
The real line $\mathbb{R}$ divides $BM$ into two open disks $BM^{\pm}$.
\end{theorem}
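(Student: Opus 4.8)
The plan is to combine the disk topology of $BM$ furnished by Theorem \ref{thm:mcmullen_disk_convexity} with the anti-holomorphic symmetry coming from complex conjugation of representations, and then to identify the two resulting halves with $BM^{\pm}$ by means of the pleating ray theory of Keen and Series \cite{keen-series04}.

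First I would pin down the Fuchsian locus $F = BM \cap \mathbb{R}$. Every representation with $\tau \in \mathbb{R}$ and $\lambda_{1/0} = l$ is Fuchsian, and by Corollary \ref{cor:BM_contains_parker_parkkonen_region} the whole line $\{\tau \in \mathbb{R}\}$ lies in $BM$, so $F = \{(l,\tau) \mid \tau \in \mathbb{R}\}$. This is a closed, connected, properly embedded arc in the open disk $BM$: its two ends leave every compact set. A properly embedded arc separates an open disk into two open disks; call them $U^{+}$ (the side with $\mathrm{Im}(\tau) > 0$ near $F$) and $U^{-}$. By Proposition \ref{prop:not_on_pm_pi}, $BM$ never reaches $|\mathrm{Im}(\tau)| = \pi$, so this is genuinely the partition of $BM$ by the real axis, and $U^{\pm}$ are open disks by the separation statement.

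Next I would exploit the involution $\sigma : (\lambda,\tau) \mapsto (\bar\lambda, \bar\tau)$ induced by complex conjugation. Since the parametrization (\ref{eq:coplexFN_for_PSL}) has a real structure, $\sigma$ preserves $\mathcal{QF}(l)$, fixes $F$ pointwise, and hence preserves the component $BM$, acting on it as an anti-holomorphic involution whose fixed-point set is exactly $F$; it therefore interchanges $U^{+}$ and $U^{-}$. Geometrically $\sigma$ exchanges the two components of the domain of discontinuity, hence the two boundary surfaces $\partial^{\pm} C$ and their bending laminations $pl^{\pm}$, so that $\sigma(BM^{+}) = BM^{-}$. Thus it suffices to prove the single identity $U^{+} = BM^{+}$, the statement $U^{-} = BM^{-}$ following by applying $\sigma$, and the disjointness $BM^{+} \cap BM^{-} = \emptyset$ being then automatic.

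To prove $U^{+} = BM^{+}$ I would first record the local picture near $F$: an explicit computation with the matrices (\ref{eq:explicit_matrices}), consistent with Theorem \ref{thm:parker-parkkonen}, shows that deforming a Fuchsian group with $\mathrm{Im}(\tau)$ increasing opens a bend along $1/0$ on the top boundary, so a neighbourhood of $F$ in $U^{+}$ lies in $BM^{+}$ (note also $BM^{+} \cap F = \emptyset$, since a Fuchsian group is unbent). The substantial step is the global one: that the locus where $1/0$ is actually the top bending lamination is open and relatively closed in $U^{+}$, whence, being nonempty, it exhausts $U^{+}$ by connectedness. The decisive input is that $\lambda_{1/0} \equiv l$ is real everywhere on the slice, so $1/0$ is a pleating-curve candidate at every point; this is precisely the situation handled by the pleating ray theory of \cite[Theorem 5 and \S 10]{keen-series04}, and it is the part I would borrow rather than reprove. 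I expect relative closedness to be the main obstacle: one must prevent the bending measure from degenerating or its support from migrating to another lamination before $\partial U^{+}$ is reached, and here the elliptic and cusp loci isolated in Theorem \ref{thm:parker-parkkonen} and Proposition \ref{prop:elliptic_loci} serve to confine the boundary behaviour along $\partial U^{+}$.
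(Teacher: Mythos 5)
This statement is quoted from Keen--Series (Theorem~5 and \S 10 of \cite{keen-series04}); the paper offers no proof of its own, so there is no internal argument to compare yours against. Your outline is a reasonable reconstruction of how such a proof must go, and its topological skeleton is sound: the real locus is exactly $\{\mathrm{Im}(\tau)=0\}$ (both $\cosh(\tau/2)$ and $\cosh((\tau+l)/2)$ real forces $\mathrm{Im}(\tau)\in 2\pi\mathbb{Z}$, hence $0$ in the strip), it is a properly embedded line in the open disk $BM$ and so separates it into two disks, and complex conjugation does act anti-holomorphically on $X(l)$ as $\tau\mapsto\bar\tau$, swapping the two sides and the two convex-core boundaries. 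But you should be explicit that the entire analytic content --- openness of $\{[pl^{+}]=1/0\}$ in $U^{+}$, its relative closedness (which needs continuity of the bending lamination together with the fact that vanishing bending measure on one side forces the group to be Fuchsian), and the non-migration of the bending locus --- is precisely the Keen--Series material being cited. As a self-contained proof, yours reduces the theorem to the theorem.

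There is also one genuine logical wrinkle. You claim the disjointness $BM^{+}\cap BM^{-}=\emptyset$ is ``automatic'' once $U^{\pm}=BM^{\pm}$ is established, but your open--closed argument only yields the inclusion $U^{+}\subseteq BM^{+}$ (and $U^{-}\subseteq BM^{-}$). To upgrade these to equalities you must rule out a point of $U^{-}$ lying in $BM^{+}$, i.e.\ a point where both $[pl^{+}]$ and $[pl^{-}]$ equal $1/0$; so disjointness is an input, not an output. It does hold, because the two bending laminations of a non-Fuchsian quasi-Fuchsian group must together fill the surface and hence cannot both be the single curve $1/0$, but this fact (due to Bonahon--Otal, and also part of the Keen--Series pleating framework) needs to be invoked where you currently wave at it.
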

\begin{remark}
The terminology BM-slice is used for each of $BM^{\pm}$ in \cite{keen-series04}.
\end{remark}
\begin{corollary}
\label{cor:pleating_loci}
Suppose $\rho \in \mathcal{QF}(l)$ and $\rho$ is not Fuchsian. 
Then $\rho$ is in the BM-slice if and only if one of $[pl^{\pm}(\rho)]$ is $1/0$.
\end{corollary}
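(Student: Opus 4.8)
The plan is to read off this corollary directly from Theorem \ref{thm:BM_is_foliated_by_pleating_rays} together with the definitions of $BM^\pm$ and the identification of the Fuchsian locus in $X(l)$ with the real line $\mathbb{R}$. The key point is that the two sets $BM^\pm$ are \emph{defined} by the bending condition $[pl^\pm(\rho)] = 1/0$, while Theorem \ref{thm:BM_is_foliated_by_pleating_rays} simultaneously asserts that these very sets are the two open disks into which $\mathbb{R}$ cuts the standard component $BM$. Thus the corollary is essentially a reformulation of that theorem, and no new geometric input is needed.

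For the implication from the bending condition to membership in $BM$, I would argue as follows. Suppose one of $[pl^\pm(\rho)]$ equals $1/0$, say $[pl^+(\rho)] = 1/0$. Since $\rho \in \mathcal{QF}(l)$ by hypothesis, the pair of conditions $[pl^+(\rho)] = 1/0$ and $\lambda_{1/0}(\rho) = l$ places $\rho$ in $BM^+$ by the very definition of $BM^+$. By Theorem \ref{thm:BM_is_foliated_by_pleating_rays} we have $BM^+ \subset BM$, hence $\rho$ lies in the BM-slice. The case $[pl^-(\rho)] = 1/0$ is identical.

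For the converse, I would use that $\rho$ is not Fuchsian. Recall that within the slice $X(l)$ the real line $\mathbb{R}$ consists exactly of the Fuchsian representations with $\lambda_{1/0} \equiv l$, so a non-Fuchsian $\rho \in BM$ cannot lie on $\mathbb{R}$. Theorem \ref{thm:BM_is_foliated_by_pleating_rays} then gives $BM \setminus \mathbb{R} = BM^+ \sqcup BM^-$, so $\rho \in BM^+$ or $\rho \in BM^-$; unwinding the definition of $BM^\pm$ yields $[pl^+(\rho)] = 1/0$ or $[pl^-(\rho)] = 1/0$, which is what we want. The only step that requires any care — and it is the sole obstacle — is justifying that the non-Fuchsian hypothesis removes precisely the real line from $BM$; this is immediate from the remark that $\mathbb{R}$ corresponds to all Fuchsian representations in the slice, so the deduction is purely formal once Theorem \ref{thm:BM_is_foliated_by_pleating_rays} is in hand.
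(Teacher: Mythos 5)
Your argument is correct and matches the paper's own treatment: the corollary is stated there without proof precisely because it is the purely formal consequence of Theorem \ref{thm:BM_is_foliated_by_pleating_rays} and the definition of $BM^{\pm}$ that you spell out (both directions, including the observation that the non-Fuchsian hypothesis excludes exactly the real line). Nothing further is needed.
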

These are further decomposed as
\[
BM^{\pm} =  \bigsqcup_{[\mu] \in \mathcal{PML} \setminus \{1/0\}} \mathcal{P}^{\pm}_{\mu}, \quad 
\]
where we define the \emph{pleating rays} $\mathcal{P}^{\pm}_{\mu}$ for $[\mu] \in \mathcal{PML}(S) \setminus \{1/0\}$ by
\[
\mathcal{P}^{\pm}_{\mu} = \{ \rho \in \mathcal{QF} \mid [pl^{\pm}(\rho)] = 1/0, \quad \lambda_{1/0}(\rho) = l, \quad [pl^{\mp}(\rho)] =  [\mu] \}.
\]
If $[\mu]$ is a simple closed curve corresponding to $p/q$, we denote it as $\mathcal{P}^{\pm}_{p/q}$, and call a \emph{rational pleating ray}. 
The following was shown in Theorems 6 and 4 of \cite{keen-series04}.
\begin{theorem}[Keen-Series]
\label{thm:rational_pleating_rays_are_dense}
Each pleating ray is an open interval. 
The rational pleating rays $\mathcal{P}^{\pm}_{p/q}$ are dense in $BM^{\pm}$.
\end{theorem}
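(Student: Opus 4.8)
The plan is to treat the two assertions in order: first I would pin down the rational pleating rays as real loci of complex length functions and prove each is an open arc, and only then derive density of the rational rays from continuity of the bending-lamination map together with density of rational laminations in $\mathcal{PML}(S)$.

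For the first assertion, the starting observation is that if $\rho \in \mathcal{P}^{\pm}_{p/q}$, so that $[pl^{\mp}(\rho)] = p/q$, then the geodesic representative of $p/q$ lies in the bending locus of $\partial^{\mp}C(\rho)$ and bounds totally geodesic pieces on either side. A standard symmetry argument (the bending line lies in the intersection of two support planes) then forces the complex length $\lambda_{p/q}(\rho)$ to be real, equivalently $\tr_{p/q}(\rho) \in \mathbb{R}$. Thus $\mathcal{P}^{\pm}_{p/q}$ is contained in the real-analytic locus $R_{p/q} = \{ \rho \in BM^{\pm} \mid \lambda_{p/q}(\rho) \in \mathbb{R} \}$. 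Since $BM^{\pm}$ is a two-real-dimensional disk and $R_{p/q}$ is cut out by the single equation $\mathrm{Im}\,\lambda_{p/q} = 0$, the next step is to check that $R_{p/q}$ is a one-dimensional real-analytic submanifold, i.e. that $d(\mathrm{Im}\,\lambda_{p/q})$ does not vanish on it; this is where I would invoke the non-degeneracy of the length function in bending directions.

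Then I would show that $\mathcal{P}^{\pm}_{p/q}$ is exactly one connected component of $R_{p/q}$, namely an open interval. Connectedness is the crux: I would argue that along $R_{p/q}$ the real length $l_{p/q} = \mathrm{Re}\,\lambda_{p/q}$ is a proper, strictly monotone parameter, so that no interior critical points occur and the ray cannot close up. The two ends must then be controlled: one end limits to the Fuchsian locus $\mathbb{R} \subset BM$, where $l_{p/q}$ attains its Fuchsian value, while the other limits to a cusp at which $\rho(g_{p/q})$ becomes parabolic, i.e. $\tr_{p/q} \to 2$; Theorem \ref{thm:parker-parkkonen} locates precisely such cusp groups on the boundary of $BM$. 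Hence $\mathcal{P}^{\pm}_{p/q}$ is an open interval whose closure joins the Fuchsian line to a rational cusp. The same scheme, applied to irrational $\mu$ with $\lambda_\mu$ interpreted as the length of a measured lamination, yields that every pleating ray is an open interval. For density, I would use the bending-lamination map $\beta^{\pm} : BM^{\pm} \to \mathcal{PML}(S) \setminus \{1/0\}$, $\rho \mapsto [pl^{\mp}(\rho)]$, whose fibres are exactly the pleating rays. This map is continuous and surjective, and since rational projective laminations are dense in $\mathcal{PML}(S)$, any $\rho$ with irrational bending lamination $\mu = [pl^{\mp}(\rho)]$ is approximated by choosing rationals $p_n/q_n \to \mu$ and points of $\mathcal{P}^{\pm}_{p_n/q_n}$ matching the bending data; continuity of $\beta^{\pm}$ then forces convergence to $\rho$, so $\bigcup_{p/q} \mathcal{P}^{\pm}_{p/q}$ is dense in $BM^{\pm}$.

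I expect the main obstacle to be the connectedness of $R_{p/q}$: one must rule out spurious components on which $\lambda_{p/q}$ is real but $p/q$ is not the actual bending lamination, and one must prove that $l_{p/q}$ is genuinely monotone along the ray. This requires the local and global pleating theorems of Keen-Series, namely the real-analyticity of pleating coordinates and the absence of interior critical points of the length function restricted to the pleating variety, which in turn rest on the convexity of length functions along bending and earthquake paths.
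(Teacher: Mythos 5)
First, a point of comparison: the paper does not prove this statement at all; it is quoted verbatim from Theorems 4 and 6 of \cite{keen-series04}, so there is no internal proof to measure your attempt against. Judged on its own, your outline correctly identifies the main ingredients of the Keen--Series argument: a rational pleating ray lies in the real locus of $\lambda_{p/q}$ (equivalently of $\tr_{p/q}$, which the paper also notes just after the theorem statement), the local pleating theorem gives non-degeneracy of $\mathrm{Im}\,\lambda_{p/q}$ along that locus, and the ray is the component running from the Fuchsian line to a cusp where $\tr_{p/q}\to\pm 2$, with $l_{p/q}$ monotone along it. You also correctly flag the hard point, namely separating the pleating ray from spurious components of the real locus on which $\lambda_{p/q}$ is real but $p/q$ is not the bending locus; but flagging it is not the same as resolving it, and in \cite{keen-series04} this is exactly where the limit pleating theorem and Kerckhoff-type convexity of length functions do the real work. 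The extension to irrational rays is also more delicate than ``the same scheme,'' since for irrational $\mu$ the ray is obtained as a limit of rational rays rather than as a real locus of a polynomial trace function.

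The genuine gap is in your density argument. You propose to approximate a point $\rho$ with irrational bending lamination $\mu$ by points of $\mathcal{P}^{\pm}_{p_n/q_n}$ with $p_n/q_n\to\mu$, and to conclude via ``continuity of $\beta^{\pm}$.'' That implication runs in the wrong direction: continuity of $\beta^{\pm}$ says that if $\rho_n\to\rho$ then $\beta^{\pm}(\rho_n)\to\beta^{\pm}(\rho)$; it gives no control whatsoever on whether a sequence chosen on the rays $\mathcal{P}^{\pm}_{p_n/q_n}$ actually converges to $\rho$. To make this work you need the full pleating coordinates of Keen--Series: the map sending $\rho$ to the pair consisting of the projective class $[pl^{\mp}(\rho)]$ and the length of the bending lamination is a homeomorphism of $BM^{\pm}$ onto an open subset of $(\mathcal{PML}(S)\setminus\{1/0\})\times\mathbb{R}_{>0}$, or at least openness and properness of $\beta^{\pm}$. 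Density of the rational rays is then the image, under the inverse homeomorphism, of the density of rational data. As written, your final step is a non sequitur, and repairing it requires precisely the ``global pleating theorem'' that you invoke only as background support at the very end.
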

We also refer to \cite{komori-parkkonen} for further properties of BM-slices.
By definition, a rational pleating ray lies on a subset of the real locus of a complex length function, which is essentially given by a polynomial function 
(see the proof of Proposition \ref{prop:trace_fuction}).
This fact enables us to draw the shape of pleating rays $\mathcal{P}^{\pm}_{p/q}$, and thus $BM^{\pm}$ by explicit computation.

\subsection{First proof of Theorem \ref{komori_yamashita_theorem}}
\begin{proof}[Proof of Theorem \ref{komori_yamashita_theorem}]
As in \S \ref{subsec:linear_slices_of_real_length}, we regard $\{ (l,\tau) \mid -\pi \leq \mathrm{Im}(\tau) < \pi \}$ as $X(l)$.
Consider the length function $\lambda_{p/q}$ for $q \geq 4$.
We fix $l_0 > 0$ small enough so that $2 \arccos( \tanh(l_0/2) ) > 3 \pi / q$.
By Corollary \ref{cor:existence_of_real_locus_at_large_real_part}, for sufficiently large $t$, 
there exists $\tau = t + b \sqrt{-1}$ with $\pi /q < b < 3 \pi / q$ such that $\mathrm{Im} ( \lambda_{p/q}(l_0, \tau) ) = 0$. 
Thus if we let $l = \lambda_{p/q}(l_0, \tau) \in \mathbb{R}$, then $(l_0, \tau) \in X_{p/q}(l)$.

Since $|\mathrm{Im}(\tau)| = b < 3 \pi / q < 2 \arccos( \tanh(l_0/2) )$, 
$(l_0, \tau)$ is in the BM-slice by Corollary \ref{cor:BM_contains_parker_parkkonen_region}.
Therefore the supports of the bending laminations of $(l_0, \tau)$ are $1/0$ and some $p'/ q'$ by Corollary \ref{cor:pleating_loci}.
By taking $t$ sufficiently large, we assume that $(l_0, t + b\sqrt{-1})$ does not belong to $\mathcal{P}_{p/q}^{\pm}$, in particular $p'/q' \neq p/q$.
(We remark that a rational pleating ray $\mathcal{P}^{\pm}_{p/q}$, which is known to be 
an open interval by Theorem \ref{thm:rational_pleating_rays_are_dense}, is compactified by adding a Fuchsian group and a cusp group for each end.)

By applying a homeomorphism of $S$ which sends $p/q$ to $1/0$, the corresponding representation is in $\mathcal{QF}_{1/0}(l)$, 
but neither of the supports of the bending laminations is not $1/0$.
By Corollary \ref{cor:pleating_loci}, this is not in the BM-slice.
\end{proof}
\begin{remark}
In the proof, we can also consider $\lambda_{p/q}$ with $q = 3$ instead, but we assumed $q \geq 4$ to make the argument simple.
But if we assume $q = 1$ or $2$, the above argument does not work. 
\end{remark}

\section{Complex projective structures and complex earthquakes}
\label{sec:projective_structure}
In this section, we review the theory of complex projective structures and complex earthquakes. 
We refer to \cite{dumas}, \cite{ito_LMS} and \cite{mcmullen} for details.
We will give the second proof of Theorem \ref{komori_yamashita_theorem} in \S \ref{subsec:second_proof}.
\subsection{Complex projective structures}
A \emph{complex projective structure} (or $\mathbb{C}P^1$-structure) on a surface $S$ is a geometric structure defined by atlas of charts in $\mathbb{C}P^1$ 
whose transition functions are restrictions of $\PSLC$.
Since the action of $\PSLC$ on $\mathbb{C}P^1$ is holomorphic, a complex projective structure also gives a holomorphic structure on $S$.
We always assume that its underlying holomorphic structure around a cusp of $S$ is biholomorphic to some neighborhood of a punctured disk.
As in the definition of Teichm\"{u}ller space, we can define the space $P(S)$ of marked projective structures.
Namely, a marked projective structure is a $\mathbb{C}P^1$ structure $C$ with a marking homeomorphism $f:S \to C$, 
and two marked projective structures $(f_1, C_1)$ and $(f_2, C_2)$
are equivalent if there exists a map $h: C_1 \to C_2$ whose restriction to any chart is a M\"{o}bius transformation such that $f_1 \circ h$ is homotopic to $f_2$.
We denote the set of marked projective structures on $S$ by $P(S)$.
We denote the Teichm\"{u}ller space of $S$ by $\mathcal{T}(S)$ and the natural forgetful map by $\pi : P(S) \to \mathcal{T}(S)$.

The holonomy of a complex projective structure gives a $\PSLC$-representation of $\pi_1(S)$, which is unique up to conjugation.
By assumption, the holonomy around a puncture is parabolic.
Thus we have a map $\mathrm{hol} : P(S) \to X_{par}(S)$.

\subsection{Grafting and Thurston coordinates}
Let $\gamma$ be a simple closed curve on $S$ and $r$ a positive real number.
For $X \in \mathcal{T}(S)$, we define a marked complex projective structure $\mathrm{Gr}_{r \cdot \gamma}(X) \in P(S)$ as follows.
First, we replace $\gamma$ with a geodesic representative in its homotopy class with respect to the hyperbolic metric $X$.
$\mathrm{Gr}_{r \cdot \gamma}(X)$ is constructed by cutting $X$ along $\gamma$ and inserting a flat annulus of height $r$ with circumference $l_\gamma(X)$ without twisting.
This operation is called \emph{grafting}.

As we have seen in \S \ref{subsec:quasi-fuchsian}, the weighted simple closed curve $r \cdot \gamma$ is regarded as a measured lamination,
and the set of weighted simple closed curves is dense in $\mathcal{ML}(S)$.
The definition of $\mathrm{Gr}$ can be extended continuously to $\mathcal{ML}(S)$. 
For $\mu \in \mathcal{ML}(S)$ and $X \in \mathcal{T}(X)$, we denote the grafted surface by $\mathrm{Gr}_{\mu}(X)$.
The map $\mathrm{Gr} : \mathcal{ML}(S) \times \mathcal{T}(S) \to P(S)$ is continuous.
Moreover $\mathrm{Gr}$ is a homeomorphism, thus gives a global coordinate system for $P(S)$, called \emph{Thurston coordinates}.
\begin{theorem}[Thurston, Kamishima-Tan \cite{kamishima-tan}]
\label{thm:thurston_coordinates}
The grafting map
\[
\mathrm{Gr} : \mathcal{ML}(S) \times \mathcal{T}(S) \to P(S)
\]
is a homeomorphism.
\end{theorem}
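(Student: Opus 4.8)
The plan is to construct an explicit inverse to $\mathrm{Gr}$ via Thurston's canonical stratification of a projective surface, show that it is continuous and a two-sided inverse, and conclude that $\mathrm{Gr}$ is a homeomorphism. Since both $\ML(S)\times\mathcal{T}(S)$ and $P(S)$ are Hausdorff, it suffices to exhibit a continuous map $\Theta:P(S)\to\ML(S)\times\mathcal{T}(S)$ with $\Theta\circ\mathrm{Gr}=\mathrm{id}$ and $\mathrm{Gr}\circ\Theta=\mathrm{id}$, together with continuity of $\mathrm{Gr}$ itself; the latter follows from the continuous dependence of the developing map on the grafting data in the compact-open topology.

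First I would build $\Theta$ from the maximal-disk construction. Given a marked projective structure with developing map $D:\widetilde{S}\to\mathbb{C}P^1$ and holonomy $\rho$, assign to each $x\in\widetilde{S}$ the maximal round disk $B_x\subset\mathbb{C}P^1$ that is the image under $D$ of a maximal embedded projective disk centred at $x$. Under the identification $\mathbb{C}P^1\cong\partial_\infty\mathbb{H}^3$, each $B_x$ bounds a hyperbolic plane, and a canonical choice of point on it produces a $\rho$-equivariant $1$-Lipschitz map $\widehat{D}:\widetilde{S}\to\mathbb{H}^3$ whose image is a locally convex pleated surface. The induced path metric descends to a hyperbolic structure $X\in\mathcal{T}(S)$, and the bending of the pleated surface assembles into a transverse measured lamination $\mu\in\ML(S)$; I then set $\Theta=(\mu,X)$. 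The recovery of $\rho$ from this pleated-surface data is exactly the mechanism recalled in \S\ref{subsec:quasi-fuchsian} via \cite{epstein-marden}.

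Next I would verify the two compositions. For $\mu=r\cdot\gamma$ a weighted simple closed curve one checks directly that $\mathrm{Gr}_{r\cdot\gamma}(X)$ carries the inserted flat annulus onto the geodesic $\gamma$ under $\widehat{D}$, while the complementary hyperbolic region maps isometrically; hence the maximal-disk construction returns precisely $(r\cdot\gamma,X)$, giving $\Theta\circ\mathrm{Gr}=\mathrm{id}$ on a dense set, and the reverse composition reconstructs the projective structure from its canonical stratification. The general case follows from density of weighted simple closed curves in $\ML(S)$ together with continuity.

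The \textbf{main obstacle} will be the analytic regularity of $\Theta$: proving that the bending locus of $\widehat{D}$ is genuinely a geodesic lamination carrying a full-support transverse measure, and that the assignment $D\mapsto(\mu,X)$ is continuous at laminations that are not weighted simple closed curves, where one must control the weak$^*$ limit of the bending measures and the geometric limit of the pleated surfaces simultaneously. The cusped-surface setting used here forces additional care: one must check that the maximal-disk construction respects the prescribed parabolic cusp behaviour and yields a compactly supported $\mu$, so that $\Theta$ genuinely lands in $\ML(S)\times\mathcal{T}(S)$. I would extract these regularity statements from the Epstein--Marden theory of the convex-hull boundary and from Kamishima--Tan's \cite{kamishima-tan} direct analysis rather than re-derive them here.
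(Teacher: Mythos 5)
The paper does not prove this statement: it is quoted as a known theorem of Thurston, with \cite{kamishima-tan} as the reference, and no argument is given in the text. So there is no internal proof to compare against; your proposal can only be measured against the standard proof in the cited literature, and it does follow that standard route — the Thurston decomposition of a projective surface by maximal round disks, the collapsing map to a hyperbolic structure, and the bending data of the associated locally convex pleated surface as the measured lamination. As an outline the strategy is the right one, and you correctly isolate the hard analytic points (full support and transversality of the bending measure, continuity at irrational laminations, and the cusped-surface adjustments).

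Two places where the sketch, as written, is not yet a proof. First, the assignment $x\mapsto B_x$ is not well defined pointwise: a maximal embedded projective disk is maximal among embedded disks, not ``centred'' at any point, and a given $x$ generally lies in a positive-dimensional family of maximal disks. The actual construction stratifies $\widetilde{S}$ by the cores of maximal disks and defines the collapsing map and the pleated-surface map stratum by stratum; without this the claim that $\widehat{D}$ is canonical and $1$-Lipschitz has no content. Second, your verification of $\Theta\circ\mathrm{Gr}=\mathrm{id}$ by checking weighted simple closed curves and invoking density presupposes that $\mathrm{Gr}$ is already known to be continuous on all of $\ML(S)\times\mathcal{T}(S)$; but, as the paper itself notes, $\mathrm{Gr}$ on general laminations is \emph{defined} by continuous extension from the weighted simple closed curves, so this step must be rephrased (e.g.\ prove $\Theta$ is continuous and is a left inverse on the dense set, then conclude $\Theta\circ\mathrm{Gr}=\mathrm{id}$ everywhere), and the reverse identity $\mathrm{Gr}\circ\Theta=\mathrm{id}$ on all of $P(S)$ — which is where injectivity and surjectivity actually live — is asserted rather than argued. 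Both points are resolved in \cite{kamishima-tan}, to which you defer, so the proposal is an accurate roadmap of the known proof rather than a self-contained one.
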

The composition map $\mathcal{ML}(S) \times \mathcal{T}(S) \xrightarrow{\mathrm{Gr}}{} P(S) \xrightarrow{\pi}{} \mathcal{T}(S)$ is also referred to as a grafting map 
and usually denoted by $\mathrm{gr}$, but we do not use it in this paper.

\subsection{Complex projective structures with quasi-Fuchsian holonomy}
\label{subsec:cpx_proj_str_with_qf_hol}
By definition, $\mathrm{hol}^{-1}(\mathcal{QF}(S)) \subset P(S)$ is the set of marked complex structures with quasi-Fuchsian holonomy.
A complex projective structure with quasi-Fuchsian holonomy is called \emph{standard} if its developing map is injective, otherwise \emph{exotic}.
Let $Q_0 \subset \mathrm{hol}^{-1}(\mathcal{QF}(S))$ be the set of standard complex projective structures.
The holonomy map $\mathrm{hol} : Q_0 \to \mathcal{QF}(S)$ gives a diffeomorphism.

Let $\mathcal{ML}_{\mathbb{Z}}(S)$ be the set of isotopy classes of disjoint collections of essential simple closed curves with non-negative integral weights.
We can regard $\mathcal{ML}_{\mathbb{Z}}(S)$ as integral points of $\mathcal{ML}(S)$ in coordinates by train tracks.

For $X \in \mathcal{T}(S)$ and $\mu \in \mathcal{ML}_{\mathbb{Z}}(S)$,
the holonomy of $\mathrm{Gr}_{2 \pi \cdot \mu}(X)$ coincides with the one of $X$, in particular, has a Fuchsian holonomy.
More generally, for $Z \in Q_0$, since it is obtained by a quasi-conformal conjugation from a Fuchsian uniformization, 
we can always take an \emph{admissible} multicurve homotopic to $\mu \in \mathcal{ML}_{\mathbb{Z}}(S)$ and perform $2 \pi$-grafting along it 
(see \cite[\S 6]{GKM} for $2 \pi$-grafting along admissible curves.).
This operation does not change the holonomy, in particular, the resulting projective surface has a quasi-Fuchsian holonomy.
It only depends on the homotopy class of the admissible multicurve, and gives a diffeomorphism from $Q_0$ onto its image $Q_\mu \subset \mathrm{hol}^{-1}(\mathcal{QF}(S))$.
Goldman  \cite{goldman} showed that this construction gives a complete classification of complex projective structures with quasi-Fuchsian holonomy:
\begin{theorem}[Goldman]
\label{thm:goldman}
\[
\mathrm{hol}^{-1}(\mathcal{QF}(S)) = \bigsqcup_{\mu \in \mathcal{ML}_{\mathbb{Z}}(S)} Q_{\mu}.
\]
\end{theorem}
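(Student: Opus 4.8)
The plan is to establish the two assertions packaged in the statement separately: that the sets $Q_\mu$ cover $\mathrm{hol}^{-1}(\QF(S))$, and that they are pairwise disjoint. The engine for both is to compare an arbitrary $Z \in \mathrm{hol}^{-1}(\QF(S))$, with developing map $D \colon \widetilde{S} \to \mathbb{C}P^1$ and holonomy $\rho$, against the unique standard structure $Z_0 \in Q_0$ carrying the same holonomy (which exists and is unique because $\mathrm{hol} \colon Q_0 \to \QF(S)$ is a diffeomorphism). Since $\rho$ is quasi-Fuchsian, its limit set $\Lambda(\rho)$ is a $\rho$-invariant Jordan curve separating the two components $\Omega^{\pm}$ of the domain of discontinuity, and a standard structure is exactly one whose developing map is a homeomorphism onto one component $\Omega^{+}$. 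The idea is to recover the grafting data of $Z$ from the way $D$ meets this quasicircle.

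First I would form the pullback $\widehat{\Gamma} := D^{-1}(\Lambda(\rho)) \subset \widetilde{S}$. Because $D$ is a local homeomorphism and $\Lambda(\rho)$ is locally a Jordan arc, $\widehat{\Gamma}$ is a $\pi_1(S)$-invariant $1$-submanifold of $\widetilde{S}$, and each component of its complement is carried by $D$ into $\Omega^{+}$ or into $\Omega^{-}$. I would show that $\widehat{\Gamma}$ descends to a disjoint union of essential simple closed curves on $S$, with the standard case $Z \in Q_0$ characterised by $\widehat{\Gamma} = \emptyset$ (equivalently, $D$ injective into the single component $\Omega^{+}$). For the covering statement I would then argue that collapsing the regions where $D$ crosses $\Lambda(\rho)$ amounts to $2\pi$-ungrafting along this multicurve: after removing the inserted annuli, the developing map becomes injective into $\Omega^{+}$ and produces a standard structure, so that $Z$ is obtained from $Z_0 \in Q_0$ by $2\pi$-grafting along an admissible multicurve homotopic to some $\mu$. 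The weights are forced to be integral multiples of $2\pi$ because the holonomy around each core curve is trivial, so only full $2\pi$-turns can occur while preserving $\rho$; hence $Z \in Q_\mu$ with $\mu \in \ML_{\mathbb{Z}}(S)$.

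For disjointness I would observe that the weighted multicurve $\mu$ read off from $\widehat{\Gamma}$ depends only on $Z$ and on no auxiliary choice, and that by construction it coincides with the grafting parameter defining $Q_\mu$. Since $\mu$ is thus a well-defined invariant of $Z$, no $Z$ can lie in two different $Q_\mu$, which yields the disjoint-union decomposition.

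The hard part will be controlling $\widehat{\Gamma}$: verifying that the pullback of the limit set is genuinely a multicurve of closed leaves (rather than an exotic lamination with non-compact or recurrent leaves) and that the associated collapsing is exactly the $2\pi$-grafting operation used to define $Q_\mu$. In the purely Fuchsian case $\Lambda(\rho)$ is a round circle, and one can analyse $D$ by explicit planar geometry, with complementary regions developing into half-planes; for genuinely quasi-Fuchsian $\rho$ the quasicircle is only a topological Jordan curve, so this structural analysis must be carried out topologically, or transported from the Fuchsian model through the quasiconformal conjugacy, and this is where the real work lies. The identification of the collapsing operation with admissible $2\pi$-grafting, as in \cite[\S 6]{GKM}, together with the integrality of the weights, then completes the argument.
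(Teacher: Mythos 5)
First, a caveat about the comparison itself: the paper does not prove this statement. It is quoted from Goldman \cite{goldman} (together with the routine passage from Fuchsian to quasi-Fuchsian holonomy by quasiconformal conjugation), so your proposal can only be measured against Goldman's original argument, not against anything in this paper.

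Your outline does identify Goldman's actual strategy --- pull back the limit set $\Lambda(\rho)$ under the developing map $D$, analyse the complementary regions, and recognise $Z$ as a $2\pi$-grafting of the standard structure with the same holonomy --- but as a proof it has a genuine gap, and it is precisely the one you flag yourself at the end. Everything of substance lies in showing that $D^{-1}(\Lambda(\rho))$ descends to a \emph{finite} union of \emph{essential simple closed} curves and that each component of $D^{-1}(\Omega^{-})$ projects to an open annulus on which $D$ restricts to a covering of integral degree. A local homeomorphism can pull a Jordan curve back to a $1$-manifold with non-compact or recurrent leaves; ruling this out is the heart of Goldman's paper, and he does it by proving that $D$ restricted to each component of $D^{-1}(\Omega^{\pm})$ is a covering map onto its image (a completeness argument), from which the annular structure of the ``wrong-side'' regions and the integer weights both follow. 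Your proposal names this difficulty but supplies no argument for it. Two further points are off or missing: (i) the claim that the weights are integral ``because the holonomy around each core curve is trivial'' is wrong --- the core of a grafting annulus is an essential curve $\gamma$ with loxodromic holonomy $\rho(\gamma)$; integrality comes from the degree with which $D$ wraps the annular region around the complement of the two fixed points of $\rho(\gamma)$ in $\mathbb{C}P^1$, not from triviality of holonomy; (ii) the paper applies the theorem to a punctured torus with parabolic peripheral holonomy, whereas Goldman's theorem is stated for closed surfaces, so one must additionally control $D^{-1}(\Lambda(\rho))$ near the cusp; your sketch does not address the punctures at all. The disjointness argument, by contrast, is essentially fine once the invariant $\mu$ has been shown to be well defined, which again rests on the unproven structural analysis.
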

We call $Q_0$ the \emph{standard component}, and $Q_\mu \, (\mu \neq 0)$ the \emph{exotic component}.
We remark that If $(f, C) \in Q_{\mu}$ and $\phi$ is a diffeomorphism of $S$, 
then $(f \circ \phi, C)$ is in $Q_{\phi^{-1}(\mu)}$ since $f \circ \phi ( \phi^{-1}(\mu)) = f(\mu)$.
\begin{remark}
For non-zero $\mu, \mu' \in \mathcal{ML}_\mathbb{Z}(S)$, we can also perform $2 \pi$-grafting of an element of $Q_{\mu}$ along an admissible multicurve homotopic to $\mu'$.
In this case, the result may depend on the choice of the admissible curve in the homotopy class \cite{ito}, \cite{calsamiglia-deroin-francaviglia}.
\end{remark}

\subsection{Complex earthquakes}
First we recall the Fenchel-Nielsen twist deformation of the Teichm\"{u}ller space $\mathcal{T}(S)$.
Let $\gamma$ be a simple closed curve on $S$ and $t$ a real (possibly negative) number. 
For $X \in \mathcal{T}(S)$, we replace $\gamma$ with a geodesic in its homotopy class as in the case of grafting.
The Fenchel-Nielsen twisting is obtained by cutting the surface along $\gamma$ and gluing it back with twisting hyperbolic length $t$ to the right.
We denote the resulting marked hyperbolic surface by $\mathrm{tw}_{t \cdot \gamma}(X) \in \mathcal{T}(S)$.
This construction can be generalized for measured laminations, called \emph{earthquakes} \cite{kerckhoff}.

We denote the upper half plane by $\mathbb{H} = \{ \tau \in \mathbb{C} \mid \mathrm{Im}(\tau) > 0\}$ and its closure by 
$\overline{\mathbb{H}} = \{ \tau \in \mathbb{C} \mid \mathrm{Im}(\tau) \geq 0\}$.
For a measured lamination $\mu$ on $S$, the \emph{complex earthquake} is a map $\overline{\mathbb{H}} \times \mathcal{T}(S) \to P(S)$ defined by
\begin{equation}
\label{eq:complex_earthquake}
\mathrm{Eq}_{(t+b\sqrt{-1}) \cdot \gamma}(X) = \mathrm{Gr}_{b \cdot \gamma}( \mathrm{tw}_{t \cdot \gamma}(X)).
\end{equation}
(Here $t$ and $b$ imply `twisting' and `bending' respectively.)
\begin{remark}
If we let
\[
\ML_{\overline{\mathbb{H}}}(S) = \{ (\lambda, z) \in \ML(S) \times \overline{\mathbb{H}} \} / \langle (\lambda, t z) \sim  ( t \lambda, z) \mid  t \in \mathbb{R}_{>0} \rangle,
\]
$\mathrm{Eq}$ gives a continuous map $\mathcal{ML}_{\overline{\mathbb{H}}} \times \mathcal{T}(S) \to P(S)$.
The definition of $\mathcal{ML}_{\overline{\mathbb{H}}}(S)$ naturally extended to $\mathbb{C}$.
McMullen enlarged the domain of $\mathrm{Eq}$ to include some region in the lower half space \cite{mcmullen}.
Roughly, this is obtained by turning the two convex core boundaries upside-down.
\end{remark}

From now on, we focus on the case where $S$ is a once-punctured torus. 
We represent a simple closed curve on $S$ by a rational number as in \S \ref{subsec:simple_closed_curves}.
We assume that $\mu = 1/0$.
In this case, the complex earthquake can be interpreted as a lift of the complex Fenchel-Nielsen coordinates $\psi_{FN}$ as follows.
For a real number $l>0$, we take a marked hyperbolic surface $X_l$ so that $l_{1/0}(X_l) = l$, 
and the geodesic representatives of $1/0$ and $0/1$ are orthogonal.
As a point on $\mathcal{T}(S)$, $X_l$ is uniquely determined.
If we let $\tau = t + \sqrt{-1} b \in \overline{\mathbb{H}}$, the representation $\mathrm{hol}(\mathrm{Eq}_{\tau \cdot 1/0} (X_l))$ is obtained from 
$X_l$ by twisting distance $t$ and bending angle $b$ along $1/0$.
Thus we have 
\[
\mathrm{hol}(\mathrm{Eq}_{\tau \cdot 1/0} (X_l)) = \psi_{FN}(l, \tau)
\]
where $\psi_{FN}$ is given by the complex Fenchel-Nielsen coordinates (\ref{eq:coplexFN_for_PSL}).
In particular, $\mathrm{hol} ( \mathrm{Eq}_{\overline{\mathbb{H}} \cdot 1/0} (X_l) ) = X(l)$.
We summarize the discussion in the following:
\begin{proposition}
\label{prop:summary_of_complex_earthquake}
We regard $\overline{\mathbb{H}}$ as a subset of $P(S)$ via the complex earthquake (\ref{eq:complex_earthquake}) 
and $X(l)$ as $\{ \tau \mid -\pi \leq \mathrm{Im}(\tau) < \pi \}$ by Proposition \ref{prop:complexFN}.
Then the restriction of the holonomy map 
$\mathrm{hol} : \overline{\mathbb{H}}   \to    \{ \tau \mid -\pi \leq \mathrm{Im}(\tau) < \pi \}$ is regarded as $\tau \mapsto \tau \mod 2 \pi \sqrt{-1} \mathbb{Z}$.
\begin{equation}
\label{eq:complex_earthquake_to_linear_slice}
\begin{matrix}
P(S) & \xrightarrow[]{\mathrm{hol}} & X(S) \\
\rotatebox{90}{$\subset$} & &  \rotatebox{90}{$\subset$} \\
\mathrm{Eq}_{\overline{\mathbb{H}} \cdot 1/0} (X_l) \cong \overline{\mathbb{H}} & \to  &  X(l) \cong \{ \tau \mid -\pi \leq \mathrm{Im}(\tau) < \pi \} \\
\rotatebox{90}{$\in$} & &  \rotatebox{90}{$\in$} \\
\tau  & \mapsto & \tau \mod 2 \pi \sqrt{-1} \mathbb{Z}.
\end{matrix}
\end{equation}
\end{proposition}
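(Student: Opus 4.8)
The plan is to assemble the proposition from ingredients already in hand, since its content is a bookkeeping statement about how the complex earthquake, the complex Fenchel--Nielsen map $\psi_{FN}$, and the chosen fundamental domain fit together.

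First, I would invoke the identity $\mathrm{hol}(\mathrm{Eq}_{\tau \cdot 1/0}(X_l)) = \psi_{FN}(l, \tau)$ established just above: for $\tau = t + \sqrt{-1}\,b \in \overline{\mathbb{H}}$, the holonomy of the complex earthquake is obtained from $X_l$ by twisting distance $t$ and bending angle $b$ along $1/0$, which is exactly the geometric meaning of the complex Fenchel--Nielsen coordinates (\ref{eq:coplexFN_for_PSL}). Thus the holonomy map $\overline{\mathbb{H}} \to X(l)$ reads as $\tau \mapsto \psi_{FN}(l, \tau)$.

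Second, I would use the $2\pi\sqrt{-1}$-periodicity of $\psi_{FN}$ in the variable $\tau$, which has two complementary explanations. Algebraically, the computation in \S\ref{subsec:trace_functions} shows via $\cosh((\tau + 2\pi\sqrt{-1})/2) = -\cosh(\tau/2)$ that $(l, \tau)$ and $(l, \tau + 2\pi\sqrt{-1})$ differ only by signs in the last two trace coordinates, hence are identified in the $\PSLC$-character variety $X(l)$ under the $H^1(S;\mathbb{Z}/2\mathbb{Z})$-action of (\ref{eq:action_of_H^1(S)}). Geometrically, increasing the bending angle by $2\pi$ amounts to a $2\pi$-grafting along $1/0$, which preserves the holonomy. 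Either way, $\tau \mapsto \psi_{FN}(l, \tau)$ factors through the reduction $\tau \mapsto \tau \mod 2\pi\sqrt{-1}\mathbb{Z}$.

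Third, I would fix the fundamental domain by taking $b_0 = -\pi$ in Proposition \ref{prop:complexFN}, so that $\psi_{FN}(l, \cdot)$ restricts to a bijection from the strip $\{\tau \mid -\pi \le \mathrm{Im}(\tau) < \pi\}$ onto $X(l)$; this is precisely the identification used in the statement. Combining the three ingredients: given $\tau \in \overline{\mathbb{H}}$, let $\tau'$ be its unique representative modulo $2\pi\sqrt{-1}\mathbb{Z}$ lying in the strip; then $\psi_{FN}(l, \tau) = \psi_{FN}(l, \tau')$ in $X(l)$, and under the strip identification this point is the parameter $\tau'$. Hence $\mathrm{hol}$ reads as $\tau \mapsto \tau \mod 2\pi\sqrt{-1}\mathbb{Z}$, as claimed. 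The only point requiring care --- and the nearest thing to an obstacle --- is keeping the $\SLC$ and $\PSLC$ bookkeeping straight: the periodicity holds only after passing to the $\PSLC$-character variety, so I would be explicit that the target of $\mathrm{hol}$ is $X_{par}(S)$ and that the sign change in the last two trace coordinates is exactly the $H^1(S;\mathbb{Z}/2\mathbb{Z})$-ambiguity of the trace, not a genuine change of holonomy.
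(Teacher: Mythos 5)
Your proposal is correct and follows essentially the same route as the paper, which states this proposition as a summary of the immediately preceding discussion: the identity $\mathrm{hol}(\mathrm{Eq}_{\tau \cdot 1/0}(X_l)) = \psi_{FN}(l,\tau)$ from the geometric meaning of twisting and bending, the $2\pi\sqrt{-1}$-periodicity of $\psi_{FN}$ in $\tau$ at the $\PSLC$ level, and the strip identification of $X(l)$ from Proposition \ref{prop:complexFN}. Your explicit attention to the $\SLC$ versus $\PSLC$ bookkeeping is a welcome clarification but not a departure from the paper's argument.
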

\begin{lemma}
\label{lem:criterion_for_existence_of_non-standard_component}
Let $S$ be a once-punctured torus, and consider the complex earthquake $\mathrm{Eq}_{\overline{\mathbb{H}} \cdot 1/0} (X_l) \subset P(S)$ for $l>0$.
Let $Z \in \mathrm{Eq}_{\overline{\mathbb{H}} \cdot 1/0} (X_l) \cap Q_{\mu}$ for some $\mu \in \mathcal{ML}_{\mathbb{Z}}(S)$.
If $\mu \notin \{ k \cdot 1/0  \mid  k = 0, 1, \cdots \}$, then $\mathrm{hol}(Z) \in \mathcal{QF}(l)$ is in a non-standard component.
\end{lemma}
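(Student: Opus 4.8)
The plan is to prove the contrapositive in lifted form: I will show that the entire preimage of the standard component $BM$ under the complex earthquake lies in $\bigcup_{k \geq 0} Q_{k \cdot 1/0}$, so a projective structure lying over $BM$ can only belong to such a Goldman component. By Proposition \ref{prop:summary_of_complex_earthquake}, the holonomy restricted to $\mathrm{Eq}_{\overline{\mathbb{H}} \cdot 1/0}(X_l) \cong \overline{\mathbb{H}}$ is the quotient map $\tau \mapsto \tau \bmod 2\pi\sqrt{-1}\mathbb{Z}$ onto $X(l) \cong \{-\pi \le \mathrm{Im}(\tau) < \pi\}$. Since $BM \subset \mathcal{QF}(l) \subset \{-\pi < \mathrm{Im}(\tau) < \pi\}$ by Proposition \ref{prop:not_on_pm_pi}, the set $BM$ sits in an open strip of width exactly $2\pi$, so its preimage in $\overline{\mathbb{H}}$ is a disjoint union $\bigsqcup_{k \ge 0} BM_k$, where $BM_k$ is the homeomorphic copy of $BM$ (for $k = 0$, of $\overline{BM^+}$) contained in $\{(2k-1)\pi < \mathrm{Im}(\tau) < (2k+1)\pi\} \cap \overline{\mathbb{H}}$; in particular each $BM_k$ is connected.

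Next I would identify the Goldman component of each copy. By Theorem \ref{thm:goldman} the sets $Q_\mu$ are precisely the connected components of $\mathrm{hol}^{-1}(\mathcal{QF}(S))$, so the connected set $\mathrm{Eq}_{BM_k \cdot 1/0}(X_l)$, which lands in $\mathcal{QF}(S)$ under the holonomy, is contained in a single $Q_\mu$. To pin down $\mu$ it suffices to test one convenient point: the real line $\mathbb{R}$ consists of Fuchsian representations and hence lies in $BM$, so for any $t_0 \in \mathbb{R}$ the point $\tau = t_0 + 2\pi k \sqrt{-1}$ belongs to $BM_k$. By the definition of the complex earthquake, $\mathrm{Eq}_{(t_0 + 2\pi k\sqrt{-1}) \cdot 1/0}(X_l) = \mathrm{Gr}_{2\pi k \cdot 1/0}(\mathrm{tw}_{t_0 \cdot 1/0}(X_l))$ is the $2\pi k$-grafting of the Fuchsian surface $\mathrm{tw}_{t_0 \cdot 1/0}(X_l)$ along the geodesic $1/0$; as $1/0$ is admissible for a Fuchsian structure, this is exactly $2\pi$-grafting performed $k$ times along a curve homotopic to $k \cdot 1/0$, hence lies in $Q_{k \cdot 1/0}$. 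Therefore $\mathrm{Eq}_{BM_k \cdot 1/0}(X_l) \subset Q_{k \cdot 1/0}$, and the preimage of $BM$ lies in $\bigcup_{k \ge 0} Q_{k \cdot 1/0}$.

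Finally I would conclude. Given $Z = \mathrm{Eq}_{\tau \cdot 1/0}(X_l) \in Q_\mu$ with $\mu \notin \{k \cdot 1/0\}$, suppose for contradiction that $\mathrm{hol}(Z) \in BM$. Then $\tau$ lies in the preimage of $BM$, so $Z \in Q_{k \cdot 1/0}$ for some $k$; disjointness of the Goldman components forces $\mu = k \cdot 1/0$, contradicting the hypothesis. Since $Z \in Q_\mu \subset \mathrm{hol}^{-1}(\mathcal{QF}(S))$ lies over the slice, $\mathrm{hol}(Z) \in \mathcal{QF}(l)$, and as it is not in $BM$ it lies in a non-standard component. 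The step I expect to be the main obstacle is the component-identification: one must be sure that the flat-annulus grafting of height $2\pi k$ entering the complex earthquake agrees with Goldman's $2\pi$-grafting along an admissible curve used to define $Q_{k \cdot 1/0}$ (this is clean here because the base surface is Fuchsian and $1/0$ is a simple closed geodesic), and that connectedness of each $BM_k$ genuinely permits a single base point to determine the component of the whole copy.
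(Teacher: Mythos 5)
Your proof is correct, and it runs on the same two facts that drive the paper's own proof: that $\mathrm{hol}$ restricted to $\mathrm{Eq}_{\overline{\mathbb{H}}\cdot 1/0}(X_l)\cong\overline{\mathbb{H}}$ is $\tau\mapsto\tau\bmod 2\pi\sqrt{-1}\mathbb{Z}$ (Proposition \ref{prop:summary_of_complex_earthquake}), and that the Fuchsian lines $\mathbb{R}+2\pi k\sqrt{-1}$ lie in $Q_{k\cdot 1/0}$. The organization, however, is the contrapositive of the paper's. The paper takes the component $U$ of $\mathrm{Eq}_{\overline{\mathbb{H}}\cdot 1/0}(X_l)\cap Q_{\mu}$ containing $Z$, notes that $U$ misses every line $\mathbb{R}+2\pi k\sqrt{-1}$ because those lie in the disjoint components $Q_{k\cdot 1/0}$, hence is trapped in a strip $2(m-1)\pi<\mathrm{Im}<2m\pi$, and concludes that $\mathrm{hol}(U)$ is a component of $\mathcal{QF}(l)$ containing no Fuchsian representation; you instead lift the single component $BM$ and show its full preimage is $\bigsqcup_{k\geq 0}BM_k\subset\bigcup_{k\geq 0}Q_{k\cdot 1/0}$. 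Each version has a small point the other sidesteps. The paper must justify that $\mathrm{hol}(U)$ is a \emph{whole} component of $\mathcal{QF}(l)$, which uses that $\mathrm{hol}$ is a homeomorphism on each open strip of width $2\pi$. You must justify that $BM_0=BM\cap\overline{\mathbb{H}}$ is connected, which is not automatic for an arbitrary connected open set meeting $\partial\overline{\mathbb{H}}$; here it follows from the invariance of $BM$ under $\tau\mapsto\bar{\tau}$ (used elsewhere in the paper, and consistent with Theorem \ref{thm:BM_is_foliated_by_pleating_rays}), since any path in $BM$ from a given point to $\mathbb{R}$ can be reflected into $\overline{\mathbb{H}}$ — you should say a word about this rather than assert it. Your component-identification at the test points $t_0+2\pi k\sqrt{-1}$ is exactly the assertion the paper makes (``$\mathbb{R}+2\pi k\sqrt{-1}$ belongs to $Q_{k\cdot 1/0}$''), and the worry you flag about reconciling the flat-annulus grafting of height $2\pi k$ with Goldman's $2\pi$-grafting along an admissible multicurve is resolved just as you say, because the base structure is Fuchsian and the geodesic $1/0$ is admissible.
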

\begin{proof}
Under the identification $\overline{\mathbb{H}} \cong \mathrm{Eq}_{\overline{\mathbb{H}} \cdot 1/0} (X_l) \subset P(S)$, 
the set of complex projective structures with Fuchsian holonomy corresponds to $\mathbb{R} + (2 \pi \sqrt{-1}) \cdot \mathbb{Z}_{\geq 0}$.
Moreover, for every $k \in \mathbb{Z}_{\geq 0}$, $\mathbb{R} + 2 \pi k \sqrt{-1}$ belongs to $Q_{k \cdot 1/0}$.

Let $U$ be the component of $\mathrm{Eq}_{\overline{\mathbb{H}} \cdot 1/0} (X_l) \cap Q_{\mu}$ containing $Z$.
If $\mu \notin \mathbb{Z}_{\geq 0} \cdot 1/0$, 
then $U \subset \{ \tau \in \overline{\mathbb{H}} \mid 2 (m-1) \pi < \mathrm{Im} (\tau) < 2 m \pi \}$ for some $m \in \mathbb{N}$.
Since the holonomy map has the form (\ref{eq:complex_earthquake_to_linear_slice}) on $\mathrm{Eq}_{\overline{\mathbb{H}} \cdot 1/0} (X_l)$, 
$\mathrm{hol}(U)$ is a component of $\mathcal{QF}(l)$, and $\mathrm{hol}(U)$ does not contain any Fuchsian representation.
Thus $\mathrm{hol}(U)$ is a non-standard component containing $Z$.
\end{proof}
The converse of Lemma \ref{lem:criterion_for_existence_of_non-standard_component} is true, but we need a little longer argument.
We postpone the proof of the next proposition to \S \ref{sec:pleated_surfaces}, since we will not use later.
\begin{proposition}
\label{prop:converse_to_the_criterion_lemma}
Let $S$ be a once-punctured torus, and consider the complex earthquake $\mathrm{Eq}_{\overline{\mathbb{H}} \cdot 1/0} (X_l) \subset P(S)$ for $l>0$.
Let $Z \in \mathrm{Eq}_{\overline{\mathbb{H}} \cdot 1/0} (X_l) \cap Q_{\mu}$ with $\mu \in \mathcal{ML}_{\mathbb{Z}}(S)$.
Then $\mathrm{hol}(Z) \in \mathcal{QF}(l)$ is in a non-standard component if and only if $\mu \notin \mathbb{Z}_{\geq 0} \cdot 1/0$.
\end{proposition}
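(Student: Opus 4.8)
The plan is to prove the contrapositive of the implication not already supplied by Lemma~\ref{lem:criterion_for_existence_of_non-standard_component}. That lemma gives $\mu \notin \mathbb{Z}_{\geq 0}\cdot 1/0 \Rightarrow \mathrm{hol}(Z)$ non-standard, so it remains to show that if $\mu = k \cdot 1/0$ for some $k \in \mathbb{Z}_{\geq 0}$, then $\rho := \mathrm{hol}(Z)$ lies in the standard component $BM$. Write $Z = \mathrm{Eq}_{\tau \cdot 1/0}(X_l)$ with $\tau \in \overline{\mathbb{H}}$ as in Proposition~\ref{prop:summary_of_complex_earthquake}. Since $\mathrm{hol} : Q_0 \to \QF(S)$ is a diffeomorphism (Theorem~\ref{thm:goldman}), there is a unique standard structure $Z_{st} \in Q_0$ with $\mathrm{hol}(Z_{st}) = \rho$; and by Goldman's description of $Q_{k \cdot 1/0}$ as a $2\pi$-grafting of $Q_0$, I may write $Z = \Gr_{2\pi \cdot \nu}(Z_{st})$ for some admissible multicurve $\nu$ homotopic to $k \cdot 1/0$ (see \cite{GKM} for grafting along admissible curves).

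Next I would bring in two facts about Thurston coordinates (Theorem~\ref{thm:thurston_coordinates}); write $\mathcal{L}(Y) \in \ML(S)$ for the measured-lamination coordinate of $Y \in P(S)$. First, directly from the definition of the complex earthquake, $Z = \Gr_{\mathrm{Im}(\tau) \cdot 1/0}(\mathrm{tw}_{\mathrm{Re}(\tau) \cdot 1/0}(X_l))$, so $\mathcal{L}(Z) = \mathrm{Im}(\tau) \cdot 1/0$ is supported on $1/0$. Second, by Thurston's description of standard projective structures through the convex hull boundary (see \cite{dumas}), the Thurston lamination of the standard structure $Z_{st}$ is exactly the bending lamination of the convex core boundary facing the developing image of $Z_{st}$; that is, $\mathcal{L}(Z_{st}) = pl^{+}(\rho)$ for one of the two sides.

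The heart of the argument is to deduce from these that $pl^{+}(\rho) = \mathcal{L}(Z_{st})$ is supported on $1/0$. The point is that the passage $Z_{st} \mapsto Z = \Gr_{2\pi \cdot \nu}(Z_{st})$ is carried out along the curve $\nu \simeq 1/0$, so it can only alter the Thurston lamination along $1/0$ and cannot remove intersections with $1/0$: any leaf of $\mathcal{L}(Z_{st})$ transverse to $1/0$ would still cross $1/0$ in $\mathcal{L}(Z)$, so $i(\mathcal{L}(Z), 1/0) \geq i(\mathcal{L}(Z_{st}), 1/0)$. Since $\mathcal{L}(Z) = \mathrm{Im}(\tau) \cdot 1/0$ satisfies $i(\mathcal{L}(Z), 1/0) = 0$, we get $i(\mathcal{L}(Z_{st}), 1/0) = 0$, and on the once-punctured torus the only measured laminations disjoint from $1/0$ are the multiples $c \cdot 1/0$ (cutting along $1/0$ leaves a pair of pants, which carries no essential lamination). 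Hence either $\rho$ is Fuchsian, whence $\rho \in \mathbb{R} \subset BM$, or $[pl^{+}(\rho)] = 1/0$, whence one of $[pl^{\pm}(\rho)]$ equals $1/0$ and Corollary~\ref{cor:pleating_loci} gives $\rho \in BM$. In either case $\mathrm{hol}(Z)$ lies in the standard component, as required.

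I expect the main obstacle to be making the support claim of the previous paragraph fully precise, i.e. analyzing how Goldman's $2\pi$-grafting along an admissible multicurve acts on the measured-lamination coordinate, and in particular verifying that grafting along $\nu \simeq 1/0$ leaves the transverse part of $\mathcal{L}(Z_{st})$ intact (so that the intersection-number inequality above holds). I note that here the grafting is always performed along the single fixed curve $1/0$, the core of the earthquake cylinder, so the choice-dependence warned about in the remark following Theorem~\ref{thm:goldman} does not enter; what remains is the local geometric bookkeeping of the cylinder insertion, which the pleated-surface description developed in \S\ref{sec:pleated_surfaces} is meant to supply.
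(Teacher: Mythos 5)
Your reduction, via Lemma~\ref{lem:criterion_for_existence_of_non-standard_component}, to showing that $\mu=k\cdot 1/0$ forces $\mathrm{hol}(Z)\in BM$ is exactly the paper's first move, and your final steps (a lamination on $S_{1,1}$ with $i(\cdot,1/0)=0$ is a multiple of $1/0$; then Corollary~\ref{cor:pleating_loci}) are sound. But the step you yourself flag as the heart of the argument is a genuine gap, not just bookkeeping. You assert that the passage $Z_{st}\mapsto Z=\Gr_{2\pi\cdot\nu}(Z_{st})$ with $\nu\simeq k\cdot 1/0$ ``can only alter the Thurston lamination along $1/0$,'' so that $i(\mathcal{L}(Z),1/0)\geq i(\mathcal{L}(Z_{st}),1/0)$. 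The justification offered --- that a leaf of $\mathcal{L}(Z_{st})$ transverse to $1/0$ ``would still cross $1/0$ in $\mathcal{L}(Z)$'' --- presumes that leaves of $\mathcal{L}(Z_{st})$ persist as leaves of $\mathcal{L}(Z)$. That is not how Thurston coordinates behave under $2\pi$-grafting: the Thurston decomposition is a global canonical stratification (via maximal disks / the canonical locally convex pleated surface), and inserting an annulus along $\nu$ changes it everywhere, not just in a neighborhood of $\nu$; even the underlying hyperbolic structures $\pi(Z_{st})$ and $\pi(Z)$ differ. Indeed in your own setup $\mathcal{L}(Z_{st})=pl^{+}(\rho)$ lives on the convex core boundary metric while $\mathcal{L}(Z)=\mathrm{Im}(\tau)\cdot 1/0$ lives on $\mathrm{tw}_{\mathrm{Re}(\tau)\cdot 1/0}(X_l)$, and there is no leaf-by-leaf correspondence between them. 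Determining how $2\pi$-grafting transforms Thurston coordinates when the existing lamination crosses the grafting curve is precisely the hard part of the subject (cf.\ \cite{ito}, \cite{calsamiglia-deroin-francaviglia}); no monotonicity statement of the kind you need is available to cite, and nothing in your sketch proves it.

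The paper's proof avoids this entirely by never comparing $Z$ with $Z_{st}$. Since the earthquake hands you the Thurston coordinates of $Z$ explicitly, $Z=\Gr_{b\cdot 1/0}(\mathrm{tw}_{t\cdot 1/0}(X_l))$, one can peel off $2\pi$-annuli from this explicit grafting cylinder to obtain $Z'=\Gr_{b'\cdot 1/0}(\mathrm{tw}_{t\cdot 1/0}(X_l))$ with $0\leq b'<2\pi$ and $\mathrm{hol}(Z')=\mathrm{hol}(Z)$, lying in $Q_0$ or $Q_{1\cdot 1/0}$. If $Z'\in Q_0$, the injective developing map makes the associated pleated surface (bending by angle $b'$ along $1/0$) convex, so it is a convex core boundary and Proposition~\ref{prop:belongs_to_BM} gives $\mathrm{hol}(Z)\in BM$; if $Z'\in Q_{1\cdot 1/0}$, replacing $b'$ by $2\pi-b'$ lands in $Q_0$ with complex-conjugate holonomy, and the symmetry of $BM$ about $\mathbb{R}$ finishes the argument. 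If you want to salvage your approach, you would need to replace the intersection-number claim with an argument of this kind; as written, the proposal does not constitute a proof.
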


By Lemma \ref{lem:criterion_for_existence_of_non-standard_component}, the non-emptiness of $\mathrm{Eq}_{\overline{\mathbb{H}} \cdot 1/0} (X_l) \cap Q_{\mu}$ for $\mu \notin \mathbb{Z}_{\geq 0} \cdot 1/0$ implies the existence of a non-standard component. 
The result of Komori and Yamashita \cite{komori-yamashita} (based on Otal's work \cite{otal}) implies if $l$ is sufficiently small, $\mathrm{Eq}_{\overline{\mathbb{H}} \cdot 1/0} (X_l) \cap Q_{\mu}$ is empty. 

\subsection{Second proof of Theorem \ref{komori_yamashita_theorem}}
\label{subsec:second_proof}
\begin{proof} 
Fix $k \in \mathbb{N}$ and $X \in \mathcal{T}(S)$.
We take simple closed curves $\alpha = 1/0$ and $\beta = 0/1$.
Consider the following sequence of complex projective structures in Thurston coordinates
\[
(\frac{2 \pi k}{n} \cdot D_{\beta}^{n} \alpha, X)
\]
where $D_{\beta}$ is the right Dehn twist along $\beta$.
This converges to $(2\pi k \beta, X)$ as $n \to \infty$, which is in $Q_{k \cdot \beta}$ since it is obtained from $X$ by $2\pi k$-grafting along $\beta$.
Since $Q_\beta$ is open, there exists $N$ such that $(\frac{2 \pi k}{n} \cdot D_{\beta}^{n} \alpha, X) \in Q_{k \cdot \beta}$ for all $n \geq N$.
Apply $D_\beta^{-n}$, we have 
\[
(\frac{2 \pi k}{n} \cdot \alpha, \, D_{\beta}^{-n} (X)) \in Q_{k \cdot D^n_{\beta} \beta} = Q_{k \cdot \beta}
\] 
for any $n \geq N$.
On the other hand, if we let $l = l_\alpha(D_\beta^{-n}(X))$, we have 
\[
(\frac{2 \pi k}{n} \cdot \alpha, \, D_{\beta}^{-n} (X)) \in \mathrm{Eq}_{\overline{\mathbb{H}} \cdot \alpha} (X_l).
\]
Since $\beta \notin \mathbb{Z}_{\geq 0} \cdot \alpha$, $\mathcal{QF}(l)$ has a non-standard component by Lemma \ref{lem:criterion_for_existence_of_non-standard_component}.
\end{proof}

\begin{remark}
The arguments above work even if we replace $\beta= 0/1$ with $p/1 \, (p \in \mathbb{Z})$.
We can take $l$ so that $Q_{1 \cdot p/1}$ intersects with $\mathrm{Eq}_{\overline{\mathbb{H}} \cdot 1/0} (X_l)$ for any $p \in \mathbb{Z}$.
This implies that there are infinitely many non-standard components in $\mathcal{QF}(l)$, 
although this follows immediately from Corollary \ref{cor:infinitely_many_by_dehn_twists}.
Since $p/1 \, (p \in \mathbb{Z})$ are related by Dehn twists along $1/0$, these components are the same after taking the quotient by the action of Dehn twists along $1/0$.

Furthermore, if we take sufficiently large $l$ so that $Q_{j \cdot 1/0 }$ intersects with $\mathrm{Eq}_{\overline{\mathbb{H}} \cdot 1/0} (X_l)$ for all $j = 1, 2, \dots, k$, 
$\mathcal{QF}(l)$ has more than $k$ components even after taking the quotient by the action of Dehn twists along $1/0$.
We remark that the non-locally connectivity shown by Bromberg \cite{bromberg} implies that $\mathcal{QF}(l)$ may have infinitely many components in the quotient.
\end{remark}

\subsection{Generalization}
Since the complex earthquake (\ref{eq:complex_earthquake}) and Thurston coordinates are defined for any hyperbolic surface,  
Lemma \ref{lem:criterion_for_existence_of_non-standard_component} and the construction in \S \ref{subsec:second_proof} can be generalized 
for general hyperbolic surfaces.
\begin{proposition}
Let $X$ be a hyperbolic surface and $\gamma$ a simple closed geodesic on $X$.
If $l_\gamma(X)$ is sufficiently large, the complex earthquake $\mathrm{Eq}_{\, \overline{\mathbb{H}} \cdot \gamma}(X)$ has a non-empty intersection with
$Q_{\mu}$ for some $\mu \in \mathcal{ML}_{\mathbb{Z}}$ but $\mu \notin \mathbb{Z}_{\geq 0} \cdot \gamma$.
\end{proposition}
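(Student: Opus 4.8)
The plan is to mimic the construction in the second proof of Theorem \ref{komori_yamashita_theorem} (\S \ref{subsec:second_proof}), but replacing the specific curve $0/1$ on the once-punctured torus by an arbitrary simple closed curve $\delta$ on $X$ that is not homotopic to $\gamma$ (one always exists since $X$ has negative Euler characteristic, hence positive-dimensional Teichmüller space and many disjoint simple closed curves). Such a $\delta$ satisfies $\delta \notin \mathbb{Z}_{\geq 0} \cdot \gamma$ in $\ML(X)$. The key structural input is exactly the same as before: for $k \in \mathbb{N}$ and any hyperbolic surface $Y$, the point $(2\pi k \cdot \delta, Y)$ in Thurston coordinates lies in $Q_{k\cdot\delta}$ because $2\pi k$-grafting along $\delta$ does not change the (Fuchsian) holonomy, and $Q_{k\cdot\delta}$ is open in $\mathrm{hol}^{-1}(\QF(S))$ by Goldman's classification (Theorem \ref{thm:goldman}).

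\emph{First I would} set up the approximating sequence. Fix $X$ and $\gamma$, and let $D_\delta$ denote the right Dehn twist along $\delta$. Consider
\[
Z_n = \left(\tfrac{2\pi k}{n}\cdot D_\delta^n \gamma, \; X\right) \in P(X),
\]
which converges to $(2\pi k \cdot \delta, X) \in Q_{k\cdot\delta}$ as $n \to \infty$, since $\tfrac{1}{n}D_\delta^n\gamma \to \delta$ in $\ML(X)$ (the Dehn-twist image $D_\delta^n\gamma$ grows like $n\cdot i(\gamma,\delta)$ copies of $\delta$ plus a bounded contribution, and $i(\gamma,\delta) \neq 0$ when $\delta$ is chosen to intersect $\gamma$ essentially — this is the place where I must take care to pick $\delta$ with $i(\gamma,\delta) > 0$). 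By openness of $Q_{k\cdot\delta}$, there is $N$ with $Z_n \in Q_{k\cdot\delta}$ for all $n \geq N$. \emph{Next I would} apply $D_\delta^{-n}$ to land back on the curve $\gamma$: since $D_\delta^{-n}$ acts on $\ML_\mathbb{Z}$ and on $P(X)$ compatibly with the marking, and $D_\delta^{-n}(k\cdot D_\delta^n\delta) = k\cdot\delta$, one obtains
\[
\left(\tfrac{2\pi k}{n}\cdot\gamma, \; D_\delta^{-n}(X)\right) \in Q_{k\cdot\delta}.
\]
Setting $l = l_\gamma(D_\delta^{-n}(X))$, this point lies in $\mathrm{Eq}_{\,\overline{\mathbb{H}}\cdot\gamma}(X_l)$, exactly as in \S \ref{subsec:second_proof}, because a point of the form $(b\cdot\gamma, Y)$ with $l_\gamma(Y)=l$ is the image under complex earthquake of $X_l$ at the parameter $t+b\sqrt{-1}$ for the appropriate twist $t$.

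\emph{The main obstacle} I anticipate is controlling the relevant length $l = l_\gamma(D_\delta^{-n}(X))$ and making precise the quantifier ``if $l_\gamma(X)$ is sufficiently large.'' In the once-punctured torus proof the length was simply read off at the end without a uniform lower bound, but the statement here asserts a threshold on $l_\gamma(X)$ \emph{from the outset}. I would argue that as $n$ increases, $l_\gamma(D_\delta^{-n}(X)) = l_{D_\delta^n\gamma}(X)$ grows without bound (the geodesic length of $D_\delta^n\gamma$ in the fixed metric $X$ tends to infinity because its intersection number with $\delta$ grows linearly), so by choosing $n \geq N$ large one realizes arbitrarily large values of $l$; conversely, given a prescribed large target length, one adjusts $X$ within its mapping-class-group orbit. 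Once the intersection with $Q_{k\cdot\delta}$ is established for such an $l$, the conclusion is immediate: $\mu = k\cdot\delta \in \ML_\mathbb{Z}(X)$ satisfies $\mu \notin \mathbb{Z}_{\geq 0}\cdot\gamma$ precisely because $i(\delta,\gamma)>0$ forces $\delta$ to be non-parallel to $\gamma$, so the earthquake locus meets an exotic component indexed by a lamination transverse to $\gamma$, which is what the proposition demands.
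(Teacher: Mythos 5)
Your proposal is correct and follows essentially the same route as the paper: the paper gives no separate proof of this proposition, stating only that Lemma \ref{lem:criterion_for_existence_of_non-standard_component} and the construction of \S\ref{subsec:second_proof} generalize, and your argument (approximate a $2\pi k$-grafting along a curve $\delta$ crossing $\gamma$ by the points $(\tfrac{2\pi k}{n}D_\delta^n\gamma, X)$, invoke openness of the Goldman stratum, pull back by $D_\delta^{-n}$) is precisely that generalization. Two minor points to tidy, neither of which affects the conclusion: the limit lamination is $2\pi k\, i(\gamma,\delta)\cdot\delta$, lying in $Q_{k\,i(\gamma,\delta)\cdot\delta}$, rather than $2\pi k\cdot\delta$ (harmless since $i(\gamma,\delta)$ is a positive integer once $\delta$ is chosen to meet $\gamma$ essentially), and your construction --- exactly like the paper's --- exhibits only the surfaces $D_\delta^{-n}(X)$ with $l_\gamma\to\infty$ rather than a threshold uniform over all $X$ with $l_\gamma(X)$ large, which is the level of precision at which the statement is evidently intended.
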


\section{Pleated surfaces associated to real linear slices}
\label{sec:pleated_surfaces}
For every representation $\rho$ in a linear slice $X_{1/0}(l)$, 
there exists a pleated surface with pleating locus $1/0$ whose holonomy is $\rho$ up to conjugation. 
If the bending angle is small compared to $l$, this is realized as a convex core boundary.
The existence of a non-standard component is related to the existence of a pleated surface with pleating locus $1/0$ but not realized as a convex core boundary.
This fact was already observed in \cite{komori-yamashita}, but here we explain it for the proof of Proposition \ref{prop:converse_to_the_criterion_lemma}.

\subsection{Pleated surfaces}
We recall basic properties of (abstract) pleated surfaces. 
In this generality, we refer to \cite{bonahon}.

Let $S$ be a hyperbolic surface and $\lambda$ a geodesic lamination on $S$.
We regard the universal cover of $S$ with the hyperbolic plane $\mathbb{H}^2$, and let $\widetilde{\lambda}$ be a lift of $\lambda$ to $\mathbb{H}^2$.
A \emph{ pleated surface} of the pleating locus $\lambda$ is a pair $f = (\widetilde{f}, \rho)$ where 
$\rho : \pi_1(S) \to \PSLC$ is a representation and $\widetilde{f} : \mathbb{H}^2 \to \mathbb{H}^3$ is a $\rho$-equivariant 
map which sends each component of the complement $\mathbb{H}^2 \setminus \widetilde{\lambda}$ to totally geodesic surfaces.
We call $\rho$ its \emph{holonomy}, and  $\widetilde{f}$ its \emph{developing map} respectively. 
For example, the convex core boundaries of a quasi-Fuchsian representation are pleated surfaces.
But in general, $\rho$ is not assumed to be discrete.

\begin{definition}
We say that a pleated surface is \emph{convex} if $\widetilde{f}(\mathbb{H}^2)$ is the boundary of a convex subset of $\mathbb{H}^3$, 
and \emph{locally convex} if each point $p \in \mathbb{H}^2$ has a neighborhood $U$ such that $\widetilde{f}(U)$ is a part of the boundary of a convex subset of $\mathbb{H}^3$.
\end{definition}

Clearly, a convex pleated surface is locally convex. 
If a pleated surface is given by a convex core boundary of a quasi-Fuchsian representation, it is clearly a convex pleated surface. 

For a hyperbolic surface $X$ and a measured lamination $\mu$ on $X$, 
we can construct a developing map $\mathbb{H}^2 \to \mathbb{H}^3$ by bending $\mathbb{H}^2$ along the support of $\widetilde{\mu}$ according to its transverse measure.
This gives a locally convex pleated surface, but its holonomy is not discrete in general.
Conversely, a locally convex pleated surface with pleating locus $\lambda$ defines a signed transverse measure on $\lambda$ from the local convex structure.
If the transverse measure is positive, this gives a measured lamination.
(For non locally convex pleated surfaces, we need transverse H\"{o}lder distributions developed by Bonahon, instead of transverse measures.
In fact, Bonahon showed in \cite{bonahon} that the set of all abstract pleated surfaces of pleating locus $\lambda$ is parametrized by the Teichm\"{u}ller space of $S$ and 
the space of H\"{o}lder distributions for $\lambda$ with values in $\mathbb{R}/2\pi\mathbb{Z}$.)

\subsection{Real length curves}
From now on, we suppose that $S$ is a once-punctured torus.
Let $C_a$ be the simple closed curve corresponding to $1/0 \in \widehat{\mathbb{Q}}$ as in \S \ref{sec:character_variety}.
For $\rho \in X(l) = X_{1/0}(l)$, we consider the restriction $\rho |_{\pi_1(S \setminus C_a)}$.
We showed in the proof of Proposition \ref{prop:complexFN} that $\rho|_{\pi_1(S \setminus C_a)}$ is irreducible.
Since $S \setminus C_a$ is a three-holed sphere, $\rho |_{\pi_1(S \setminus C_a)}$ is completely determined up to conjugation by the traces of the holonomies along three boundary curves, 
thus $\rho |_{\pi_1(S \setminus C_a)}$ does not depend on $\rho \in X(l)$.
Since $X(l)$ contains a Fuchsian representation, $\rho |_{\pi_1(S \setminus C_a)}$ is Fuchsian.

Therefore if $\rho \in X(l)$, it can be realized as the holonomy of a pleated surface with pleating locus $1/0$.
This is locally convex but not convex in general. 
Suppose $\rho \in \mathcal{QF}(l)$, this pleated surface is convex if and only if 
it is realized as a convex core boundary, in other words, one of $[pl^{\pm}]$ is $1/0$ in $\mathcal{PML}(S)$.
Corollary \ref{cor:pleating_loci} can be rephrased as follows.
\begin{proposition}
\label{prop:belongs_to_BM}
Suppose $\rho \in \mathcal{QF}(l)$.
The pleated surface associated to $\rho$ with pleating locus $1/0$ is convex if and only if $\rho$ is in the standard component $BM$. 
\end{proposition}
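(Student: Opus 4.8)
The plan is to deduce the proposition from Corollary \ref{cor:pleating_loci} by reformulating the membership condition ``one of $[pl^{\pm}(\rho)]$ is $1/0$'' as convexity of the pleated surface. The substance therefore lies in two places: the equivalence between convexity of the pleated surface $P_\rho$ with pleating locus $1/0$ and its being realized as a convex core boundary, and a separate treatment of the Fuchsian locus, which Corollary \ref{cor:pleating_loci} excludes.

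First I would fix, for $\rho \in \mathcal{QF}(l)$, the pleated surface $P_\rho$ with pleating locus $1/0$ and holonomy $\rho$ constructed above, and note that it is unique: its underlying hyperbolic structure is the rigid pair of pants obtained by cutting along $C_a$ (determined by $l$ alone), while the remaining gluing data, the twist $\mathrm{Re}(\tau)$ and the bending angle $\mathrm{Im}(\tau) \in \mathbb{R}/2\pi\mathbb{Z}$, are determined by $\rho$; hence by Bonahon's parametrization \cite{bonahon} the surface $P_\rho$ is pinned down by $\rho$. The implication that if some $[pl^{\pm}(\rho)] = 1/0$ then $P_\rho$ is convex is then immediate, since the convex core boundary $\partial^{\pm} C$ is a pleated surface with pleating locus $1/0$, holonomy $\rho$, and the same pair of pants, so it coincides with $P_\rho$ by uniqueness, and convex core boundaries are convex.

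The main obstacle is the converse: if $P_\rho$ is convex, I must show it is a convex core boundary, so that some $[pl^{\pm}(\rho)] = 1/0$. Here I would argue that a convex $\rho$-equivariant pleated surface bounds a $\rho$-invariant convex set $K \subseteq \mathbb{H}^3$, which consequently contains the convex hull $CH(\Lambda(\rho))$ of the limit set; since $K/\rho$ is then a convex submanifold of the quasi-Fuchsian manifold whose single boundary component $P_\rho$ is isotopic to $S$ and abuts one end, minimality of the convex core forces $\partial K$ to agree with the corresponding $\partial^{\pm} C$, whence its pleating locus $1/0$ equals $[pl^{\pm}(\rho)]$. This is the step where I expect the delicate point to be the exclusion of convex pleated surfaces lying strictly outside the convex core, which is handled precisely by the uniqueness of $P_\rho$ together with the minimality of $CH(\Lambda(\rho))$.

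Finally I would assemble the cases. For non-Fuchsian $\rho \in \mathcal{QF}(l)$, the equivalence just established combined with Corollary \ref{cor:pleating_loci} gives that $P_\rho$ is convex if and only if $\rho \in BM$. For Fuchsian $\rho$ the bending angle $\mathrm{Im}(\tau)$ vanishes, so $P_\rho$ is a totally geodesic plane and hence convex, while $\rho$ lies in $BM$ by definition as the component containing the Fuchsian representations; thus both sides of the asserted equivalence hold simultaneously. This covers all of $\mathcal{QF}(l)$ and yields the proposition.
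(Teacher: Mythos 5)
Your proposal follows essentially the same route as the paper: the paper offers no formal proof of this proposition, merely asserting in the preceding paragraph that the pleated surface with pleating locus $1/0$ is convex if and only if it is realized as a convex core boundary, and then invoking Corollary \ref{cor:pleating_loci}; you reduce to the same corollary and additionally supply the uniqueness of the pleated surface (via Bonahon's parametrization, or equivalently the complex Fenchel--Nielsen coordinates) and a separate check of the Fuchsian locus, both of which are sound. The one step where your filling-in is not quite right is the converse: minimality of the convex core only gives $CH(\Lambda(\rho)) \subseteq K$, and neither that nor the uniqueness of $P_\rho$ by itself excludes a convex pleated surface lying strictly outside the convex core; what closes the argument is the standard fact that the image of a finite-area pleated surface with holonomy $\rho$ is contained in $CH(\Lambda(\rho))$, which combined with $K \supseteq CH(\Lambda(\rho))$ forces $P_\rho = \partial K \subseteq \partial CH(\Lambda(\rho))$, i.e.\ $P_\rho$ is a convex core boundary. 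With that substitution the proof is complete and matches the paper's intent.
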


\begin{proof}[Proof of Proposition \ref{prop:converse_to_the_criterion_lemma}]
By Lemma \ref{lem:criterion_for_existence_of_non-standard_component}, 
we only need to show that if $Z$ is in $\mathrm{Eq}_{\overline{\mathbb{H}} \cdot 1/0} (X_l) \cap Q_{k \cdot 1/0}$ $(k \in \mathbb{Z}_{\geq 0})$, 
then $\mathrm{hol}(Z)$ is in the standard component $BM$.
Assume that $Z \in \mathrm{Eq}_{\overline{\mathbb{H}} \cdot 1/0} (X_l) \cap Q_{k \cdot 1/0}$ $(k \in \mathbb{Z}_{\geq 0})$.
Now we can write $Z = \mathrm{Gr}_{b \cdot 1/0}( \mathrm{tw}_{t \cdot 1/0}(X_l))$ by $b \in \mathbb{R}_{\geq 0}$ and $t \in \mathbb{R}$.
Let $Z'$ be the complex projective structure obtained from $Z$ by removing $2 \pi$-annuli along $1/0$ as many as possible.
Then $Z' = \mathrm{Gr}_{b' \cdot 1/0}( \mathrm{tw}_{t \cdot 1/0}(X_l))$ where $0 \leq b' < 2 \pi$ and $Z'$ is in $Q_0$ or $Q_{1 \cdot 1/0}$.
If $Z' \in Q_0$, the injective developing map gives a convex pleated surface by the convex hull construction.
By Proposition \ref{prop:belongs_to_BM}, $\mathrm{hol}(Z) = \mathrm{hol}(Z')$ is in $BM$.
If $Z' \in Q_{1 \cdot 1/0}$, we consider $Z''= \mathrm{Gr}_{(2 \pi - b') \cdot 1/0}( \mathrm{tw}_{t \cdot 1/0}(X_l))$ 
whose holonomy is the complex conjugate of $\mathrm{hol}(Z')$ by (\ref{eq:complex_earthquake_to_linear_slice}), 
and $Z'' \in Q_0$ thus $\mathrm{hol}(Z'') \in BM$.
Since $BM$ is symmetric with respect to the real line, $\mathrm{hol}(Z') = \mathrm{hol}(Z)$ is also in $BM$.
\end{proof}

\section{Pictures}
\begin{figure}[ht]
\begin{center}
  \begin{minipage}{170pt}
    \begin{center}
      \includegraphics[width=160pt]{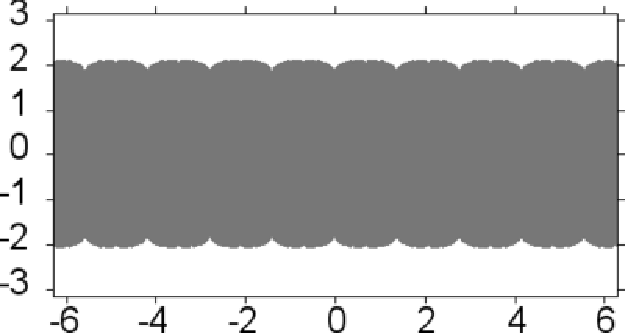}\\
      $l = 1.39$ $(\tr = 2.50\dots )$ 
    \end{center}
  \end{minipage}
  \begin{minipage}{170pt}
    \begin{center}
      \includegraphics[width=160pt]{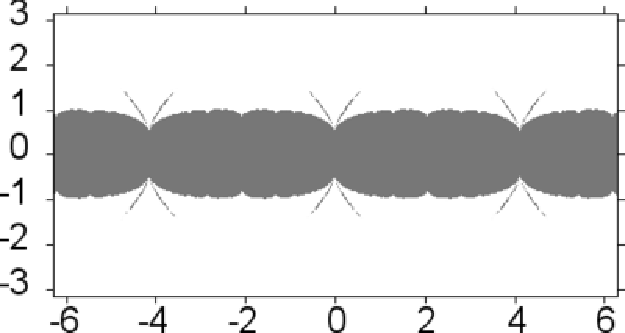}\\
      $l = 4.13$ $(\tr = 8.0\dots )$ 
    \end{center}
  \end{minipage}

  \vspace{20pt}
  \begin{minipage}{330pt}
    \begin{center}
      \includegraphics[width=320pt]{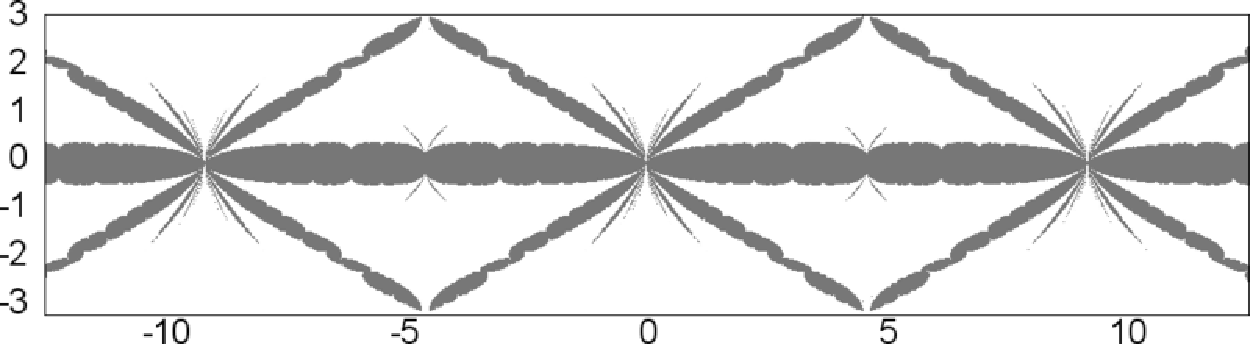}\\
      $l = 9.21$ $(\tr = 99.9\dots )$ 
    \end{center}
  \end{minipage}

  \vspace{20pt}
  \begin{minipage}{330pt}
    \begin{center}
      \includegraphics[width=320pt]{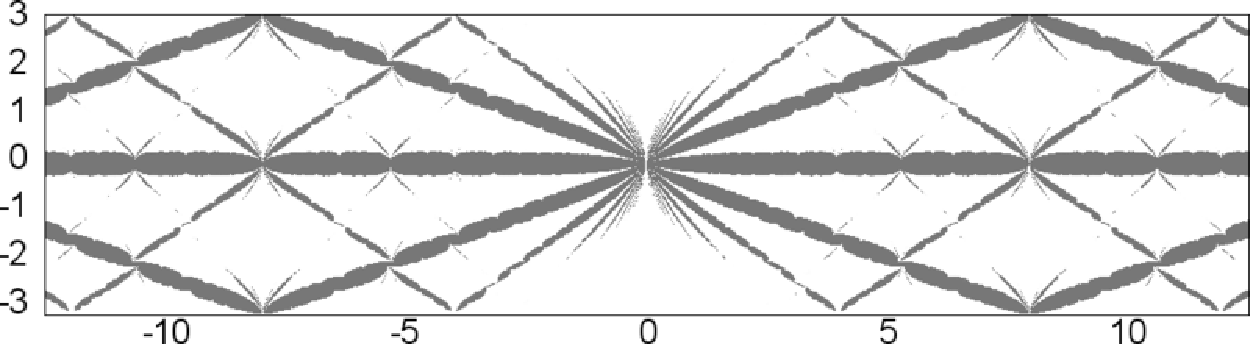}\\
      $l = 16.0$
    \end{center}
  \end{minipage}
\end{center}
\caption{Pictures of $\mathcal{QF}(l)$ (shaded regions).}
\label{fig:slices}
\end{figure}

Some pictures of $\mathcal{QF}(l)$ are shown in Figure \ref{fig:slices}.
These are drawn by a program plotting the points satisfying Bowditch's conditions \cite{bowditch}, 
which are conjectured and experimentally well confirmed to be equivalent to the condition that the corresponding representation is quasi-Fuchsian. 
Compare with Figures 1, 2, 3 in \cite{komori-yamashita}, which are plotted in the trace coordinates for $\tr = 2.5$, $\tr = 8$, $\tr = 100$ cases, respectively.
The asymptotic self-similarity in \cite[\S 7]{komori-yamashita} is nothing but translation symmetry in our coordinates.

Some developing maps are drawn in Figure \ref{fig:developing_maps}, 
but the last one ($\tau = 0.40 + 0.70 \sqrt{-1}$) is a partial picture since the developing map is not an embedding.
In these pictures, the lifts of the grafted annulus can be seen as white crescent regions.
When $\tau = 0.40 + 0.70 \sqrt{-1}$, we can observe that the limit set traverses these lifts.
This implies that the curve with integral weight appeared in Goldman's classification is not homotopic into the grafted annulus. 

\begin{figure}[ht]
\begin{center}
  \begin{minipage}{170pt}
    \begin{center}
      \includegraphics[width=150pt]{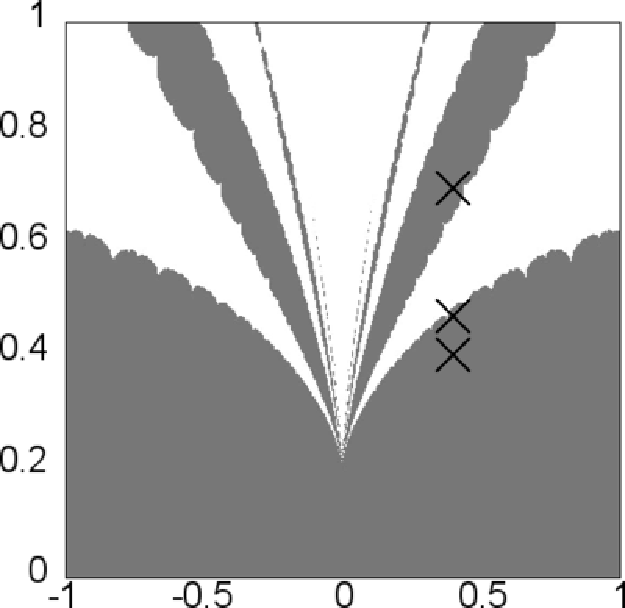}\\
      $l=6.00$
    \end{center}
  \end{minipage}
  \begin{minipage}{170pt}
    \begin{center}
      \includegraphics[width=170pt]{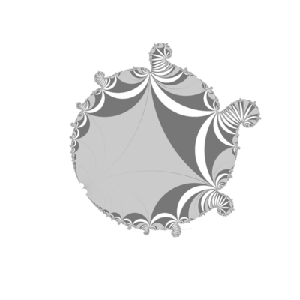}\\
      $\tau = 0.40 + 0.40 \sqrt{-1}$
    \end{center}
  \end{minipage}
  \begin{minipage}{170pt}
    \begin{center}
      \includegraphics[width=170pt]{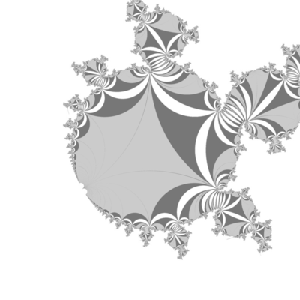}\\
      $\tau = 0.40 + 0.47 \sqrt{-1}$
    \end{center}
  \end{minipage}
  \begin{minipage}{170pt}
    \begin{center}
      \includegraphics[width=170pt]{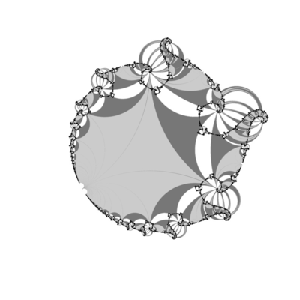}\\
      $\tau = 0.40 + 0.70 \sqrt{-1}$
    \end{center}
  \end{minipage}
\end{center}
\caption{$\mathcal{QF}(l)$ for $l=6.00$ near the origin and developing maps.}
\label{fig:developing_maps}
\end{figure}

\end{document}